\documentclass[reqno, 12pt]{amsart}

\usepackage{amsmath,amssymb,mathrsfs,mathtools,verbatim} 
\usepackage[backrefs]{amsrefs}
\usepackage[protrusion=true]{microtype}
\usepackage[T1]{fontenc}
\usepackage[margin=1in]{geometry}
\usepackage[onehalfspacing]{setspace}
\usepackage[toc,page]{appendix}
\usepackage{txfonts}
\usepackage[colorlinks,pagebackref,hypertexnames=false,bookmarks=false]{hyperref}
\numberwithin{equation}{section}							
\usepackage[nameinlink,noabbrev]{cleveref}
\usepackage{autonum}										

\setcounter{tocdepth}{1}


\let\originalleft\left
\let\originalright\right
\renewcommand{\left}{\mathopen{}\mathclose\bgroup\originalleft}
\renewcommand{\right}{\aftergroup\egroup\originalright}

\makeatletter
\renewcommand*{\eqref}[1]{\hyperref[{#1}]{\textup{\tagform@{\ref*{#1}}}}}		
\makeatother



\newcommand{\rG}{\rm{G}}




\newcommand{\bA}{{\bf A}}

\newcommand{\bD}{{\bf D}}

\newcommand{\bG}{{\bf G}}

\newcommand{\cA}{\mathcal{A}}

\newcommand{\cC}{\mathcal{C}}
\newcommand{\cD}{\mathcal{D}}
\newcommand{\cE}{\mathcal{E}}

\newcommand{\cG}{\mathcal{G}}
\newcommand{\cH}{\mathcal{H}}
\newcommand{\cI}{\mathcal{I}}

\newcommand{\cL}{\mathcal{L}}
\newcommand{\cM}{\mathcal{M}}

\newcommand{\cR}{\mathcal{R}}

\newcommand{\cV}{\mathcal{V}}





\newcommand{\SO}{\rm{SO}}

\newcommand{\SU}{\rm{SU}}


\newcommand{\Ad}{\mathrm{Ad}}
\newcommand{\Aut}{\mathrm{Aut}}

\renewcommand{\epsilon}{\varepsilon}

\newcommand{\Lie}{\mathrm{Lie}}

\newcommand{\ad}{\mathrm{ad}}

\newcommand{\diag}{\mathrm{diag}}

\newcommand{\id}{\mathrm{id}}

\renewcommand{\Im}{\mathop{\mathrm{Im}}}

\renewcommand{\Re}{\mathop{\mathrm{Re}}}

\newcommand{\vol}{\mathrm{vol}}

\newcommand{\Fix}{\mathrm{Fix}}



\def\ad{\mathrm{ad}}

\def\Aut{\mathrm{Aut}}

\def\vol{\mathrm{vol}}

\def\Im{\mathrm{Im}}

\def\Lie{\mathrm{Lie}}

\def\Re{\mathrm{Re}}

\def\SO{\mathrm{SO}}

\def\SU{\mathrm{SU}}


\def\g{\mathfrak{g}}

\def\<{\mathopen{}\left<}
\def\>{\right>\mathclose{}}
\def\({\mathopen{}\left(}
\def\){\right)\mathclose{}}



\newtheorem{theorem}{Theorem}[section]
\newtheorem{Mtheorem}{Main Theorem}
\newtheorem*{acknowledgment}{Acknowledgment}

\newtheorem{corollary}[theorem]{Corollary}

\newtheorem{definition}[theorem]{Definition}
\newtheorem{example}[theorem]{Example}

\newtheorem{lemma}[theorem]{Lemma}

\newtheorem{proposition}[theorem]{Proposition}
\newtheorem{remark}[theorem]{Remark}

\newtheorem{hypoth}[theorem]{Hypothesis}
\crefname{theorem}{Theorem}{Theorems}						
\creflabelformat{theorem}{#2{#1}#3}							
\crefname{Mtheorem}{Main Theorem}{Main Theorems}			
\creflabelformat{Mtheorem}{#2{#1}#3}						
\crefname{lemma}{Lemma}{Lemmata}							
\creflabelformat{lemma}{#2{#1}#3}							
\crefname{corollary}{Corollary}{Corollaries}				
\creflabelformat{corollary}{#2{#1}#3}						
\crefname{proposition}{Proposition}{Propositions}			
\creflabelformat{proposition}{#2{#1}#3}						
\crefname{ineq}{inequality}{inequalities}					
\creflabelformat{ineq}{#2{\upshape(#1)}#3}					
\crefname{cond}{condition}{conditions}						
\creflabelformat{cond}{#2{\upshape(#1)}#3}					
\crefname{hypoth}{Hypothesis}{Hypotheses}					
\creflabelformat{hypoth}{#2{#1}#3}							
\crefname{def}{Definition}{Definitions}						
\creflabelformat{def}{#2{#1}#3}								
\crefname{appsec}{Appendix}{Appendices}


\title{The Haydys monopole equation}

\author{\'Akos Nagy}
\address[\'Akos Nagy]{Duke University, Durham, NC, USA}
\urladdr{\href{https://akosnagy.com}{akosnagy.com}}
\email{\href{mailto:contact@akosnagy.com}{contact@akosnagy.com}}

\author{Gon\c{c}alo Oliveira}
\address[Gon\c{c}alo Oliveira]{Universidade Federal Fluminense IME--GMA, Niter\'oi, Brazil}
\urladdr{\href{https://sites.google.com/view/goncalo-oliveira-math-webpage/home}{sites.google.com/view/goncalo-oliveira-math-webpage/home}}
\email{\href{mailto:galato97@gmail.com}{galato97@gmail.com}}

\date{\today}
\keywords{Monopoles, moduli spaces, Haydys equation, hyperk\"ahler manifolds}
\subjclass[2010]{53C07, 58D27, 58E15, 70S15}

\calclayout
\pagestyle{plain}

\hypersetup{
	pdffitwindow	= true,
	unicode			= true,
	pdffitwindow	= true,
	pdftoolbar		= false,
	pdfmenubar		= false,
	pdfstartview	= {FitH},
	pdftitle 		= {The Haydys monopole equation},
	pdfkeywords		= {Monopoles, moduli spaces, Haydys equation, hyperk\"ahler manifolds},
	pdfauthor		= {\'Akos Nagy and Gon\c{c}alo Oliveira},
	colorlinks		= true,
	linkcolor		= black,
	citecolor		= black,
	filecolor		= blue,
	urlcolor		= blue
}

\begin{document}

\begin{abstract}
	We study complexified Bogomolny monopoles using the complex linear extension of the Hodge star operator; these monopoles can be interpreted as solutions to the Bogomolny equation with a complex gauge group. Alternatively, these equations can be obtained from dimensional reduction of the Haydys instanton equations to three dimensions, thus we call them Haydys monopoles.\\
	We find that (under mild hypotheses) the smooth locus of the moduli space of finite energy Haydys monopoles on $\mathbb{R}^3$ is a K\"ahler manifold containing the ordinary Bogomolny moduli space as a minimal Lagrangian submanifold---an A-brane. Moreover, using a gluing construction we construct an open neighborhood of this submanifold modeled on a neighborhood of the zero section in the tangent bundle to the Bogomolny moduli space. This is analogous to the case of Higgs bundles over a Riemann surface, where the (co)tangent bundle of holomorphic bundles canonically embeds into the Hitchin moduli space.\\
	These results contrast immensely with the case of finite energy Kapustin--Witten monopoles for which we have shown a vanishing theorem in \cite{NO21}. 
\end{abstract}

\maketitle

\tableofcontents

\section{Introduction and main results}

\subsection{Preparation and motivation}
Let $(M, g)$ be a Riemannian 3-manifold, and $\Lambda^* M$ its exterior algebra bundle. For any orthogonal vector bundle $E \rightarrow M$ the Hodge star operator extends to $E$-valued differential forms yielding a map $\ast : \Lambda^* M \otimes E \rightarrow \Lambda^{3 - *} M \otimes E$. Fix a principal $\rG$-bundle $P \rightarrow M$, where $\rG$ is a compact Lie group. A smooth pair $(\nabla, \Phi)$ consisting of a $\rG$-connection on $P$ and a section of $\g_P = \ad (P)$ (equipped with a $\rG$-invariant inner product), is called a {\em Bogomolny monopole} if 
\begin{equation}
	\ast \! F_\nabla = d_\nabla \Phi.  \label{eq:BPSeq}
\end{equation}
In the situation when $M = \mathbb{R}^3$ equipped with the Euclidean metric and $\rG = \SU (2)$ several things are known about solutions to this equation. For instance, up to the action of the automorphisms of $P$, the (finite energy) Bogomolny monopoles form a smooth noncompact moduli space. This can be equipped with the canonical $L^2$-metric which turns out to be complete and hyperk\"ahler. For higher rank structure groups, for instance when $\rG = \SU (N)$ with $N > 2$, less is know, but in many cases (cf.  \Cref{hypoth:maxsymbr} and \Cref{rem:maxsymbr} later), analogous results hold. In particular, when the moduli space is smooth at a Bogomolny monopole $m = (\nabla, \Phi)$ any (gauge fixed) tangent vector $v = (a, \Psi)$ at $m$ satisfies the linearized Bogomolny monopole equations:
\begin{subequations}
\begin{align}\label{eq:BPS_tangent_eq_1} 
	\ast d_{\nabla} a - d_{\nabla} \Psi - [a, \Phi]	& = 0,   \\ \label{eq:BPS_tangent_eq_2}
	d_{\nabla}^{\ast} a + [\Psi, \Phi] 	& = 0 , 
\end{align}
\end{subequations}
with the second equation arising from requiring the tangent vector to be orthogonal to the slice cut out by the action of the gauge group at $m$. Moreover, the formal $L^2$ dual equations
\begin{subequations}
\begin{align}\label{eq:BPS_dual_tangent_eq_1} 
	\ast d_{\nabla} a + d_{\nabla} \Psi + [a, \Phi]	& = 0,   \\ \label{eq:BPS_dual_tangent_eq_2}
	d_{\nabla}^* a + [\Psi, \Phi] 	& = 0,  
\end{align}
\end{subequations}
have no solutions in $L^2$, at least under certain standard hypotheses; again, see \Cref{hypoth:maxsymbr}. This is in fact the reason why the Implicit Function Theorem can be used to show smoothness of the moduli space of finite energy Bogomolny monopoles.

Now we ``complexify'' the Bogomolny \cref{eq:BPSeq} by considering $\Lambda_{\mathbb{C}}^* M = \Lambda^* M \otimes_{\mathbb{R}} \mathbb{C}$, $\rG_{\mathbb{C}}$ the complexification of $\rG$, and $P_{\mathbb{C}} = P \times_{\rG} \rG_{\mathbb{C}}$ which is the principal $\rG_{\mathbb{C}}$-bundle associated with the standard conjugation action of $\rG$ on $\rG_{\mathbb{C}}$. The Hodge star operator $\ast$ may now be extended in two inequivalent ways to $\Lambda_{\mathbb{C}}^* M \otimes \g_P \simeq \Lambda^* M \otimes \g_{P_{\mathbb{C}}}$. This may be either as a complex linear operator, which we still denote $\ast$, or as a conjugate linear one, which we denote by $\overline{\ast}$.\\
Depending on which such extension one uses we obtain two different complex monopole equations. In this second paper we only consider one of these which is made using $\ast$. Let $(\bA, \Upsilon)$ respectively be a connection on $P_{\mathbb{C}}$ and a section of $\g_{P_{\mathbb{C}}}$. Then, we have the following complex monopole equation
\begin{equation}\label{eq:Haydys_mono}
	\ast \! F_{\bA} = d_{\bA} \Upsilon,  
\end{equation}

\begin{remark}
	The equation obtained using $\overline{\ast}$ instead is given by
	\begin{equation}\label{eq:KW_mono}
		\overline{\ast} F_{\bA} = d_{\bA} \Upsilon,
	\end{equation}
	and studied in the second paper in this series \cite{NO21}.
\end{remark}

Let the real gauge group be denoted by $\cG = \Aut (P)$ and the complex one by $\cG_{\mathbb{C}} = \Aut (P_\mathbb{C})$. Both complex monopole \cref{eq:Haydys_mono,eq:KW_mono} are invariant under the usual action of $\cG_{\mathbb{C}}$. In order to work only modulo the action of $\cG$ we proceed as follows. Observe that $\bA$ can be uniquely written as $\bA = \nabla + i a$, with $\nabla$ a connection on $P$ and $a \in \Omega^1 (M, \g_P)$. Similarly $\Upsilon = \tfrac{1}{\sqrt{2}} (\Phi + i \Psi)$, with $\Phi, \Psi \in \Omega^0 (M, \g_P)$. A standard procedure in gauge theory to ``break down the gauge symmetry'' is to include an extra equation obtained by formally working orthogonally to the gauge action, such as a Coulomb gauges as used in \cite{Uhlenbeck1982a}. In favorable cases this has the effect of making the resulting gauge theoretical equations elliptic. In the situation at hand, instead of breaking the full symmetry we may first ``break it down'' to that of the real gauge group $\cG$ by imposing an extra equation of the form
\begin{equation}\label{eq:Coulomb}
	d_{\nabla}^* a = i [\Upsilon, \overline{\Upsilon}] \ \ \  \Leftrightarrow \ \ \ d_{\nabla}^{\ast} a + [\Psi, \Phi] = 0,  
\end{equation}
which is a Coulomb type gauge fixing condition. This makes the full system of PDE's elliptic.

\begin{remark}
	Alternatively, this extra equation may be motivated by comparison with the Kempf--Ness Theorem in finite dimensional situations. Indeed, \cref{eq:Coulomb} may be interpreted as a moment map equation for an Hamiltonian action of $\cG$ on the space of quadruples $(\nabla , a, \Psi , \Phi)$ equipped with a natural $L^2$-symplectic structure.
\end{remark}

One other point of interest in \cref{eq:Haydys_mono,eq:KW_mono,eq:Coulomb} is that they may be obtained from dimensional reductions of the instanton equations of Haydys (cf.  \cite{H15}) and Kapustin--Witten (cf.  \cite{KW07}) respectively. In this paper we focus on the first of these. For more on the later we refer the reader to our parallel article \cite{NO21}. While the Kapustin--Witten equations, and their dimensional reductions, have been attracting an increasing amount of interest from the mathematical community (see, for example, \cites{W12,W18,T17,T18,T19,MW14,HM17,HW19}) the Haydys equation has remained less explored. However, it was pointed out in \cite{H15} that its moduli space carries interesting geometric structures which have a shadow in the dimensional reduction that we consider in \Cref{sec:Geometric_Strucutres}. For completeness and motivation we included the corresponding computations in \cref{sec:dimred}.

In order to establish notation, recall that the wedge product of two $\g_{P^X}$-valued forms $a = \sum_{|I| = p} a_I dx^I$ and $b = \sum_{|J| = q} b_J dx^J$ is given by
\begin{equation}
	[a \wedge b] = \sum\limits_{\substack{|I| = p \\ |J| = q}} [a_I, b_J] dx^I \wedge dx^J,
\end{equation}
and satisfies $[a \wedge b] = (-1)^{pq+1} [b \wedge a]$. Using this, a simple computation shows that \cref{eq:Haydys_mono,eq:Coulomb} are equivalent to
\begin{subequations}
\begin{align} \label{eq:Haydys_Mono_1}
	\ast F_{\nabla} - d_{\nabla} \Phi - \tfrac{1}{2} \ast [a \wedge a] + [a, \Psi] 	&= 0,   \\ \label{eq:Haydys_Mono_2} 
	\ast d_{\nabla} a - d_{\nabla} \Psi - [a,\Phi] 	&= 0,  \\ \label{eq:Haydys_Mono_3}
	d_{\nabla}^{\ast} a + [\Psi,\Phi] 		&= 0.
\end{align}
\end{subequations}

\begin{remark}
A similar computation shows that \cref{eq:KW_mono,eq:Coulomb} are equivalent to
\begin{subequations}
	\begin{align}
	\ast F_{\nabla} - d_{\nabla} \Phi - \tfrac{1}{2} \ast [a \wedge a] + [a, \Psi]	&= 0,  \label{eq:KW_Mono_1}  \\
	\ast d_{\nabla} a + d_{\nabla} \Psi + [a, \Phi] 	&= 0,  \label{eq:KW_Mono_2}  \\
	d_{\nabla}^* a + [\Psi, \Phi] 	&= 0.  \label{eq:KW_Mono_3}
	\end{align}
\end{subequations}
Again, we point out that we studied these equations in the parallel article \cite{NO21}.
\end{remark}

Given that, as mentioned in the previous paragraph, these equations are obtained from dimensional reduction of the Haydys instanton equation, we name \cref{eq:Haydys_Mono_1,eq:Haydys_Mono_2,eq:Haydys_Mono_3} the {\em Haydys monopole equations} and their solutions {\em Haydys monopoles}. In the same way, we call \cref{eq:KW_Mono_1,eq:KW_Mono_2,eq:KW_Mono_3} {\em Kapustin--Witten monopole equations} and their solutions {\em Kapustin--Witten monopoles}. Observe that both these sets of gauge theoretic equations with gauge group $\cG$ (rather than $\cG_{\mathbb{C}}$), are elliptic modulo its action as already explained above.

Furthermore, notice that the \cref{eq:Haydys_Mono_1,eq:KW_Mono_1} are the same, and can be seen as a quadratic (but algebraic) perturbation of the Bogomolny monopole \cref{eq:BPSeq}. As for the second and third Haydys monopole \cref{eq:Haydys_Mono_2,eq:Haydys_Mono_3}, these are exactly the tangent space \cref{eq:BPS_tangent_eq_1,eq:BPS_tangent_eq_2} for the Bogomolny moduli space. On the other hand, the second and third Kapustin--Witten monopole \cref{eq:KW_Mono_2,eq:KW_Mono_3} are dual \cref{eq:BPS_dual_tangent_eq_1,eq:BPS_dual_tangent_eq_2}.

We now introduce the relevant energy functional in this complex monopole setting. Denote by $\| \cdot \|$ the usual $L^2$ norm for sections of any bundle over $M$. Given a quadruple $(\nabla, \Phi, a, \Psi)$ as before, we define a Yang--Mills--Higgs type energy functional by
\begin{equation}\label{eq:YMH_energy}
\begin{aligned}
	\cE (\nabla, \Phi, a, \Psi) &= \| F_\nabla \|^2 + \| \nabla a \|^2 + \| \nabla \Phi \|^2 + \| \nabla \Psi \|^2 + \tfrac{1}{4} \| [a \wedge a] \|^2 \\
	&\quad + \| [a,\Phi] \|^2 + \| [a,\Psi] \|^2 + \| [\Psi,\Phi] \|^2 .
\end{aligned}
\end{equation}
We also point out, without proof, that up to an overall constant and for $M$ Ricci flat, the energy \eqref{eq:YMH_energy} is simply a sum of the $L^2$ norms of $F_{\nabla + i a}$ with $d_{\nabla + i a} (\Phi + i \Psi)$ on $M$.

\subsection{Main results}

Before stating our main results we recall some---by now classic---results on the moduli spaces of Bogomolny monopoles. Let $M = \mathbb{R}^3$ with the Euclidean metric, $\rG$ a compact Lie group, and denote by $\cM_B$ and $\cM_H$ the moduli spaces of Bogomolny and Haydys monopoles. Now $\cM_B$ canonically embeds into $\cM_H$, as the {\em real} solutions (that is, with vanishing imaginary parts $a=0=\Psi$). Under a certain genericity hypothesis called {\em maximal symmetry breaking} (see \Cref{hypoth:maxsymbr} and \Cref{rem:maxsymbr}), the moduli space $\cM_B$ is a smooth and complete hyperk\"ahler manifold. Its tangent bundle $T \cM_B$ is well defined (as a smooth manifold) and $\cM_B$ also embeds into $T\cM_B$, as the zero section. Under these assumptions, our first main result can be stated as follows:

\begin{Mtheorem}[Existence theorem for Haydys monopoles]\label{Mtheorem:Main_Haydys}
	Under the assumptions of \Cref{hypoth:maxsymbr}:  The Haydys moduli space $\cM_H$ is a smooth manifold around the Bogomolny moduli space $\cM_B$, and the tangent and normal bundles of $\cM_B$ are isomorphic.\\
	In particular, there exists finite energy Haydys monopoles that are not Bogomolny monopoles.
\end{Mtheorem}

\begin{remark}
	This situation contrasts with that of finite energy Kapustin--Witten monopoles on $\mathbb{R}^3$. Indeed, while we find that many Haydys monopoles exist which are not simply Bogomolny monopoles, we proved in \cite{NO21} that any finite energy Kapustin--Witten monopole on $M= \mathbb{R}^3$ must actually be a Bogomolny monopole.
\end{remark}

The moduli space of finite energy solutions to the Haydys monopole equation on $M = \mathbb{R}^3$ inherits some interesting geometric structures mirroring the hyperk\"ahler structure on the moduli space of Bogomolny monopoles $\cM_B$. Namely, we prove that there is an infinite dimensional space 
$$\cC= \cA \times \Omega^0(M, \mathfrak{g}_P) \times \Omega^1(M, \mathfrak{g}_P) \times \Omega^0(M, \mathfrak{g}_P) \cong T (\cA \times \Omega^0(M, \mathfrak{g}_P))$$ 
carrying 3-different hyperk\"ahler structures $(I_1, I_2, I_3)$, $(J_1, J_2, J_3)$, and $(K_1, K_2, K_3)$ associated with the same metric $h$ and such that $I_1=J_1=J_1$, which we denote as $L_1$ for simplicity. The group of gauge transformation $\cG$ acts on $\cC$ preserving all the structure and the space of Haydys monopoles $\cC_H \subset \cC$ can be obtained as the common zero level set of all moment maps. Furthermore $(L_1,h)$ descends to the moduli space of Haydys monopoles $\cM_H$ equipping it with a K\"ahler structure with respect to which $\cM_B \subset \cM_H$ is a minimal Lagrangian submanifold, i.e. a minimal A-brane.\\
Finally, the three different hyperk\"ahler structures on $\cC$ mentioned above, even though not descending to the whole of $\cM_H$, they do descend to $\cM_B$ equipping it with its standard hyperk\"ahler structure.

\begin{Mtheorem}\label{Mtheorem:Geometric_Structures}
	The following assertions hold:
	\begin{enumerate}
		\item[(a)] $\cM_H$ is a K\"ahler manifold with respect to a structure compatible with the $L^2$-metric.
		\item[(b)] $\cM_B \hookrightarrow \cM_H$ as a minimal Lagrangian submanifold.
		\item[(c)] The complex structures $I_2, J_2$, and $K_2$, together with the $L^2$-metric, descend to $\cM_B \hookrightarrow \cM_H$ equipping it with a well defined hyperk\"ahler structure, which is isomorphic to its canonical $L^2$-hyperk\"ahler structure.
	\end{enumerate}
\end{Mtheorem}

\begin{remark}
	In the terminology of \cite{KW07}, part (b) of this theorem is equivalent to saying that $\cM_B \hookrightarrow \cM_H$ is a minimal A-brane.
\end{remark}

\subsection{Organization}  In \Cref{sec:dimred}, we prove that the Haydys monopole \cref{eq:Haydys_Mono_1,eq:Haydys_Mono_2,eq:Haydys_Mono_3} are the dimensional reduction of the 4-dimensional Haydys equation (as in \cite{H15}). In \Cref{sec:Solving_Haydys}, after introducing the necessary tools we prove \Cref{Mtheorem:Main_Haydys} whose proof relies on a use of the Banach space contraction mapping principle. In \Cref{sec:Geometric_Strucutres} we study the geometry of the Haydys monopole moduli space, and prove \Cref{Mtheorem:Geometric_Structures}.

\begin{acknowledgment}
	The authors would like to express their gratitude to Siqi He for having found a mistake in an earlier version of this article.
	The authors would equally like to thank Mark Stern for many helpful conversations and for having taught us so many things about gauge theory. We also thank Paul Aspinwall and Steve Rayan for explaining us what branes are.\\
	We further thank the anonymous referee for the detailed and useful report.\\
	The second author was supported by Funda\c{c}\~ao Serrapilheira 1812-27395, by CNPq grants 428959/2018-0 and 307475/2018-2, and FAPERJ through the program Jovem Cientista do Nosso Estado E-26/202.793/2019.
\end{acknowledgment}

\section{Dimensional reduction}\label{sec:dimred}  In this section we prove the dimensional reductions of the 4-dimensional Haydys equation yield the Haydys monopole \cref{eq:Haydys_Mono_1,eq:Haydys_Mono_2,eq:Haydys_Mono_3}.

Let us start by recalling the notion of complex (anti-)self-duality in dimension 4. Given an oriented, Riemannian 4-manifold $(X, g_4)$, let $\Lambda_{\mathbb{C}}^* X$ be the complexification of its exterior algebra bundle, and let $\ast_4$ and $\overline{\ast}_4$ be the complex linear and conjugate linear extensions of the Hodge star operator on $\Lambda_{\mathbb{C}}^* X$, respectively. Both $\ast_4$ and $\overline{\ast}_4$ square to the identity on $\Lambda_{\mathbb{C}}^2 X$ and hence either can be used to define (anti-)self-dual complex 2-forms. In this paper, we consider the complex linear case, that is when (anti-)self-duality is defined using $\ast_4$.

Let now $\rG$ be a compact Lie group, and $\rG_{\mathbb{C}}$ its complex form. Let $P^X$ be principal $\rG$-bundle over $X$, and define the complexified $\rG_{\mathbb{C}}$-bundle $P_{\mathbb{C}}^X = P^X \times_{\rG} \rG_{\mathbb{C}}$ as being that associated with respect to the standard action by left multiplication of $\rG$ on $\rG_{\mathbb{C}}$. Let $\g_{P^X}$ and $\g_{P_{\mathbb{C}}^X}$ be the corresponding adjoint bundles. Note that $\g_{P_{\mathbb{C}}^X} \simeq \g_{P^X} \otimes_{\mathbb{R}} \mathbb{C}$, and thus
\begin{equation}
	\Lambda_{\mathbb{C}}^k X \otimes_{\mathbb{R}} \g_{P^X} \simeq \Lambda_{\mathbb{C}}^k X \otimes_{\mathbb{C}} \g_{P_{\mathbb{C}}^X} \simeq (\Lambda^k X \otimes \g_{P^X}) \oplus i (\Lambda^k X \otimes \g_{P^X}).
\end{equation}
Any ``complex'' connection $\bA$ on $P_{\mathbb{C}}^X$ decomposes as $\bA = A + i B$, where $A$ is a ``real'' connection on $P^X$ and $B \in \Omega^1 (X, \g_{P^X})$. Then we can decompose the curvature $F_{\bA}$ of $\bA$ as follows
\begin{equation}
	F_{\bA} = \Re(F_{\bA}) + i \ \Im(F_{\bA}).
\end{equation}
and thus
\begin{align}
	\Re (F_{\bA}) &= F_A - \tfrac{1}{2} [B \wedge B],  \\
	\Im (F_{\bA}) &= d_A B.
\end{align}
Let the $\pm$ superscripts denote the pointwise orthogonal projection from $\Lambda^2 X \otimes \g_{P^X}$ onto $\Lambda_\pm^2 X \otimes \g_{P^X}$. Now we can $\bA$ anti-self-dual with respect to $\ast_4$ if
\begin{equation}\label{eq:Haydys_Equation}
	\ast_4 F_{\bA} = - F_{\bA}  \quad  \Leftrightarrow  \quad  \Re(F_{\bA})^+ = 0 = \Im(F_{\bA})^+.
\end{equation}
Note that when $\bA$ is an $\rG$-connection, that is when $B = 0$, then both \cref{eq:Haydys_Equation} reduce to the classical anti-self-duality (instanton) equation on $X$.

Supplementing \cref{eq:Haydys_Equation} with $d_A^* B = 0$ one gets the Haydys equation; cf.  \cite{H15}*{Section~4.2}.

Assume now that $X = \mathbb{S}^1 \times M$, where $M$ is a Riemannian 3-manifold with metric $g$, and $g_X$ is the product metric. Furthermore, let the orientation of $X$ given by the product orientation. The group of orientation preserving isometries of $X$ has a normal subgroup, which is isomorphic to $\SO (2)$, that acts on $\mathbb{S}^1$ as rotations. Thus, one can look for $\SO (2)$-equivariant (``static'') solutions of the Haydys \cref{eq:Haydys_Equation}. It is easy to see, that if $\bA$ is an $\SO (2)$-equivariant connection on $X$, then there exists a principal $\rG$-bundle $P \rightarrow M$, together with and isomorphism between its pullback to $X$ and $P^X$, and a quadruple $(\nabla, \Phi, a, \Psi)$, such that $\nabla$ is a connection on $P$, $a \in \Omega^1 (M, \g_P)$, and $\Phi, \Psi \in \Omega^0 (M, \g_P)$, with the property that (omitting pullbacks and the isomorphism)
\begin{equation}
	\bA = \nabla + \Phi dt + i \ (a + \Psi dt).  \label{eq:invAform}
\end{equation}
Let $\ast$ be the Hodge star operator of $(M, g)$. Then we have the following lemma:

\begin{lemma}[Dimensional reduction of the Haydys equation]
	\label{lem:dimred_H}
	The complex connection $\bA$ in \eqref{eq:invAform} solves the Haydys \cref{eq:Haydys_Equation} and $d_A^* B = 0$, if and only if \cref{eq:Haydys_Mono_1,eq:Haydys_Mono_2,eq:Haydys_Mono_3} hold.
\end{lemma}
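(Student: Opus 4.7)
The plan is to substitute the $\SO(2)$-invariant ansatz $\bA = A + i B$ with $A = \nabla + \Phi\, dt$ and $B = a + \Psi\, dt$ (all components pulled back from $M$) into the four-dimensional equations and to separate, in the resulting two-forms on $X = \mathbb{S}^1 \times M$, the pieces containing $dt$ from those pulled back from $M$.

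First I would compute the three basic two-forms. A direct calculation using $\partial_t$-independence gives
\begin{align*}
	F_A & = F_\nabla - dt \wedge d_\nabla \Phi, \\
	[B \wedge B] & = [a \wedge a] - 2\, dt \wedge [a, \Psi], \\
	d_A B & = d_\nabla a - dt \wedge (d_\nabla \Psi + [a, \Phi]),
\end{align*}
so that $\Re(F_{\bA}) = F_A - \tfrac12 [B \wedge B]$ and $\Im(F_{\bA}) = d_A B$ each split uniquely into an $M$-part $\omega \in \Omega^2(M,\g_{P})$ and a mixed part $dt \wedge \eta$ with $\eta \in \Omega^1(M,\g_{P})$.

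Second, I would record the product-orientation identity $\ast_4(\omega + dt \wedge \eta) = \ast\eta + dt\wedge\ast\omega$ for forms pulled back from $M$, so that such a form is anti-self-dual precisely when $\omega = -\ast\eta$ (equivalently $\eta = -\ast\omega$, since $\ast^2 = 1$ on $\Omega^*(M)$ in dimension $3$). Applying this separately to $\Re(F_{\bA})$ and $\Im(F_{\bA})$ and then hitting both sides with $\ast$ converts the complex anti-self-duality condition \eqref{eq:Haydys_Equation} into exactly \eqref{eq:Haydys_Mono_1} and \eqref{eq:Haydys_Mono_2}.

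Finally, I would handle the Coulomb condition $d_A^* B = 0$ by the standard computation $d_A^* = -\ast_4 d_A \ast_4$. Using $\ast_4 a = -dt \wedge \ast a$ for $a \in \Omega^1(M,\g_P)$ and $\ast_4(\Psi\, dt) = \Psi\, \mathrm{dvol}_M$, together with the three-dimensional identity $d_\nabla \ast a = -(d_\nabla^* a)\,\mathrm{dvol}_M$, one sees that all mixed cross-terms vanish for dimensional reasons and that $d_A^* B$ reduces to a multiple of $d_\nabla^* a + [\Psi, \Phi]$; setting this to zero yields \eqref{eq:Haydys_Mono_3}. The converse direction is immediate from the same identities, since each step above is a bijection between the respective pieces. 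The only real obstacle is book-keeping of Hodge-star and bracket signs under the chosen product orientation; once the conventions are pinned down the computation is mechanical.
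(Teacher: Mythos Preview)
Your proposal is correct and follows essentially the same route as the paper: both compute $F_A$, $[B\wedge B]$, and $d_A B$ for the invariant ansatz, split each into the $M$-piece and the $dt$-piece, and translate (anti-)self-duality on $X$ into the three-dimensional relation $\ast\omega=\pm\eta$ to obtain \eqref{eq:Haydys_Mono_1}--\eqref{eq:Haydys_Mono_2}, then compute $d_A^{\ast_4}B$ directly to get \eqref{eq:Haydys_Mono_3}. The only differences are cosmetic---you write mixed terms as $dt\wedge\eta$ while the paper uses $\eta\wedge dt$, and you state the product Hodge-star identity explicitly whereas the paper writes out $(d_A B)^+$---so once the orientation convention is fixed the sign bookkeeping you flag resolves identically to the paper's computation.
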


\begin{proof} Let $\bA = A + i B$. Recall that the Haydys equations for $\bA$ are
\begin{subequations}
\begin{align}
	\Re(F_{\bA})^+	&= 0,  \label{eq:H1}  \\
	\Im(F_{\bA})^+	&= 0,  \label{eq:H2}  \\
	d_A^{\ast_4} B	&= 0,  \label{eq:H3}
\end{align}
\end{subequations}
where
\begin{equation}
	\Re(F_{\bA}) = F_A - \tfrac{1}{2} [B \wedge B],  \quad  \Im(F_{\bA}) = d_A B.
\end{equation}
Now we further assume that $\bA$ has the form
\begin{equation}
	A = \nabla + \Phi dt,  \quad  B = a + \Psi dt,
\end{equation}
and the quadrupole $(\nabla, \Phi, a, \Psi)$ is pulled back from $M$. Straightforward computations yield
\begin{equation}
	F_A = F_\nabla + d_\nabla \Phi \wedge dt,  \quad  \tfrac{1}{2} [B \wedge B]	= \tfrac{1}{2} [a \wedge a] + [a, \Psi] \wedge dt,
\end{equation}
thus \cref{eq:H1} is equivalent to
\begin{equation}
	\ast F_\nabla - d_\nabla \Phi - \tfrac{1}{2} \ast [a \wedge a] + [a, \Psi] = 0,
\end{equation}
proving \cref{eq:KW_Mono_1,eq:Haydys_Mono_1}.\\
We also have
\begin{equation}
	d_A B = d_\nabla a + \left( d_\nabla \Psi + [a, \Phi] \right) \wedge dt,
\end{equation}
thus for \cref{eq:H2} we have
\begin{equation}
	\Im(F_{\bA})^+ = (d_A B)^+ = \tfrac{1}{2} \left( d_\nabla a - \ast \left( d_\nabla \Psi + [a, \Phi] \right) \right) + \tfrac{1}{2} \left( \ast d_\nabla a - \left( d_\nabla \Psi + [a, \Phi] \right) \right) \wedge dt.
\end{equation}
Thus $\Im(F_{\bA})^+ = 0$ is equivalent to
\begin{equation}
	\ast d_\nabla a - d_\nabla \Psi - [a, \Phi] = 0,
\end{equation}
which proves \cref{eq:Haydys_Mono_2}.\\
Finally, we have
\begin{equation}
	d_A^{\ast_4} B 	= d_{\nabla + \Phi dt}^{\ast_4} (a + \Psi dt) = d_\nabla^* a - [\Phi, \Psi] = d_\nabla^* a + [\Psi, \Phi],
\end{equation}
which proves \cref{eq:Haydys_Mono_3}. This completes the proof.
\end{proof}

Before proceeding, let us point out a couple of other possible ways one can interpret the Haydys monopole equations.

\begin{remark}[Reduction of the Vafa--Witten equations]
	The Vafa--Witten equation is another set of 4-dimensional, gauge theoretic PDE's; cf.  \cite{Haydys2015}*{Section~4.1} for example. We remark, without proof, that the similar reduction of the Vafa--Witten equations also results in \cref{eq:Haydys_Mono_1,eq:Haydys_Mono_2,eq:Haydys_Mono_3}. A simple way to see this is to observe that $\Lambda^1 M \simeq \Lambda_+^2 X$ via the map $b \mapsto \tfrac{1}{2} ( dt \wedge b + \ast b )$.
\end{remark}

\begin{remark}[Reduction of the split $\rG_2$-monopole equation]
	The Haydys monopole \cref{eq:Haydys_Mono_1,eq:Haydys_Mono_2,eq:Haydys_Mono_3} can be obtained as the reduction of the $\rG_2$-monopole equations on $\mathbb{R}^7$ equipped with the split $\rG_2$-structure of signature $(3,4)$. See, for example, \cite{Oliveira2016}*{Section~2}.
\end{remark}

\section{Solving the Haydys monopole equation}\label{sec:Solving_Haydys}

The goal of this section is to construct solutions to the Haydys monopole \cref{eq:Haydys_Mono_1,eq:Haydys_Mono_2,eq:Haydys_Mono_3} on $M = \mathbb{R}^3$ and, more concretely, to prove \Cref{Mtheorem:Main_Haydys}. We achieve that as follows: First, we recall the linearization of the Bogomolny \cref{eq:BPSeq} in \Cref{subsec:Bogomolny_Linearization}, then we introduce the relevant function spaces to be used in \Cref{subsec:Function_Spaces}, and prove a gap theorem for the adjoint of the linearization in \Cref{subsec:Gap_DD}. In \Cref{subsec:Multiplication_Properties}, we prove a multiplication property of the function spaces introduced in \Cref{subsec:Function_Spaces}. In \Cref{subsec:Setup_Haydys_Equation}, we reinterpret the the Haydys monopole \cref{eq:Haydys_Mono_1,eq:Haydys_Mono_2,eq:Haydys_Mono_3} which we supplement in \Cref{subsec:Linearization_Haydys} with the gauge fixing condition. These new set of equations can be viewed as fixed point equation, which we solve using Banach Fixed Point Theorem in \Cref{subsec:Proof_Haydys}. Finally, \Cref{subsec:Dimension_Moduli_Space_Haydys} contains a computation of the dimension of the moduli space, which reveals that our construction yields, in fact, an open subset of the moduli space.

\subsection{The Bogomolny monopole equation and its linearization}\label{subsec:Bogomolny_Linearization}

Let $M = \mathbb{R}^3$ and $m_0 = (\nabla_0, \Phi_0) \in \cA \times \Omega^0 (M, \g_P)$ be a pair satisfying the Bogomolny monopole \cref{eq:BPSeq}, and the finite energy condition, that is $|F_{\nabla_0}| = |d_{\nabla_0} \Phi_0| \in L^2 (\mathbb{R}^3)$. Furthermore, we make the following two hypotheses on $m_0$, which are standard in the literature:
\begin{hypoth}
\label{hypoth:monodecay}
Let $R >0$ and the radius $R$ ball $B_R \subset \mathbb{R}^3$, then $\mathbb{R}^3 \backslash B_R \cong [R, + \infty ) \times \mathbb{S}^2$ and consider the projection $\pi_\infty:\mathbb{R}^3 \backslash B \to \mathbb{S}^2$. We suppose that there is a a unitary connection, $\nabla^\infty$, on a bundle $P_\infty \to \mathbb{S}^2$ which for sufficiently large $R \gg1$ 
\begin{equation}
	P|_{\mathbb{R}^3 \backslash B_R} \cong \pi_\infty^*P_\infty,
\end{equation}
and smooth nonzero sections $\Phi_\infty, \kappa \in \Omega^0 (\mathbb{S}^2, \ad (P_\infty))$, such that $\Phi_0$ and $d_{\nabla_0} \Phi_0$ have the asymptotic expansions
\begin{subequations}
\begin{align}
	\Phi_0 				&= \pi_\infty^* \Phi_\infty - \tfrac{1}{2 r} \pi_\infty^* \kappa + O \left( r^{-2} \right),  \label{eq:limPhi}  \\
	d_{\nabla_0} \Phi_0 &= \tfrac{1}{2 r^2} \pi_\infty^* \kappa \otimes dr + O \left( r^{-3} \right) .  \label{eq:limDPhi}
\end{align}
\end{subequations}
Furthermore, we have
\begin{equation}
\nabla^\infty \Phi_\infty = 0, \quad F_{\nabla^\infty} = \tfrac{1}{2} \kappa \otimes \vol_{\mathbb{S}_\infty^2}, \quad [\Phi_\infty, \kappa] = 0.
\end{equation}
\end{hypoth}

\begin{remark}\label{rem:monodecay}
	\Cref{hypoth:monodecay} has been proven in some cases (see \cite{JT80}*{Theorem~10.5} for the first example) and, conjecturally, holds for all finite energy monopoles on $\mathbb{R}^3$; cf.  \cite{JT80}*{Theorem~18.4~\&~Corollary~18.5}. The authors of this paper, together with Benoit Charbonneau, are currently working on a proof of this conjecture.
\end{remark}

We furthermore impose a more technical hypothesis, which is crucial in the proof of \Cref{Mtheorem:Main_Haydys}.
\begin{hypoth}\label{hypoth:maxsymbr}
	The field $\Phi_\infty$ takes values in the interior of a Weyl chamber, i.e. it attains no value in a facet of one such Weyl chamber. A Bogomolny monopole satisfying this hypothesis is said to have {\em maximal symmetry breaking}.
\end{hypoth}

\begin{remark}\label{rem:maxsymbr}
	It is easy to see that a monopole has maximal symmetry breaking exactly if $\ker (\ad (\Phi_\infty))$ is Abelian. Note also that any nonflat monopole with structure group $\rG = \SU(2)$ has to have maximal symmetry breaking. This said, we point out that monopoles without maximal symmetry breaking do exist; see \cites{D92,D93}.\\
	Furthermore, we mention here, without proof, that by adapting the arguments in \cite{CLS16}, it is possible to prove that maximal symmetry breaking implies \Cref{hypoth:monodecay}. The proof of this---among more general claims---is currently being completed by the authors; see \Cref{rem:monodecay}.\\
	Finally, we conjecture that \Cref{Mtheorem:Main_Haydys} holds for monopoles with nonmaximal symmetry breaking as well.
\end{remark}

\begin{definition}\label{def:tangent}
	For any $v_0 = (a_0, \Psi_0) \in \Omega^1 (M, \g_P) \oplus \Omega^0 (M, \g_P)$, let
	\begin{align}
		d_2 (v_0) 	&= \ast d_{\nabla_0} a_0 - d_{\nabla_0} \Psi_0 - [a_0, \Phi_0 ],  \\
		d_1^* (v_0) &= d_{\nabla_0}^* a_0 -[\Phi_0, \Psi_0],
	\end{align}
	and $D = d_2 \oplus d_1^*$, let $D^*$ be the formal adjoint of $D$. A pair $v_0 = (a_0, \Psi_0) \in \Omega^1 (M, \g_P) \oplus \Omega^0 (M, \g_P)$ is called a {\em tangent vector to the Bogomolny monopole moduli space at $m_0$} if it lies in $\ker_{L_1^2} (D)$.\\
	Moreover, for any $c_0$, if we define
	\begin{equation}
		(d_{\nabla_0} \Phi_0)^W (c_0) = \left( \ast [a_0 \wedge d_{\nabla_0} \Phi_0] - [d_{\nabla_0} \Phi_0, \Psi_0], [\langle d_{\nabla_0} \Phi_0,  a_0 \rangle] \right) \in \Omega^1 (M, \g_P) \oplus \Omega^0 (M, \g_P).
	\end{equation}
\end{definition}

It is easy to see that
\begin{align}
	DD^* c &= \nabla_0^* \nabla_0 c - [[c, \Phi_0], \Phi_0],  \\ 
	D^*D c &= DD^* c + 2 (d_{\nabla_0} \Phi_0)^W(c).
\end{align}

\begin{remark}
	The operator $D$ is the linearization of the Bogomolny monopole \cref{eq:BPSeq} together with the standard Coulomb type gauge fixing condition $d_{\nabla_0}^* a_0 = [\Phi_0, \Psi_0]$. Thus, when $D : L_1^2 \rightarrow L^2$ is surjective the implicit function theorem can be used to prove that the Bogomolny monopole moduli space is smooth and its tangent space at $m_0$ can be identified with $\ker_{L_1^2} (D)$.
\end{remark}

\subsection{Function spaces}\label{subsec:Function_Spaces}

Now we introduce the various functions spaces that are used in the proof of \Cref{Mtheorem:Main_Haydys}.

\begin{definition}  Let $\| \cdot \|$ denote $L^2$-norms and $\rho = (1+|x|^2)^{1/2}$. Define the Hilbert spaces $\cH_k$ (with $k = 1,2$, or 3) as the norm-completions of $C_0^\infty (\mathbb{R}^3,( \Lambda^0 \oplus \Lambda^1) \otimes \g_P)$ via the norms
\begin{align}
	\| c \|_{\cH_0}^2 &= \| c \|^2,  \\ 
	\| c \|_{\cH_1}^2 &= \| \nabla_0 c \|^2 + \| \rho^{-1}  c \|^2 + \| [\Phi_0, c] \|^2,  \\
	\| c \|_{\cH_2}^2 &= \| \nabla_0^2 c \|^2 + \| \rho^{-1} \nabla_0 c \|^2 + \| [\Phi_0, \nabla_0 c ] \|^2  +  \| [\Phi_0 , [\Phi_0, c] ] \|^2+ \| [\Phi_0, \rho^{-1} c ] \|^2 + \| \rho^{-2}  c \|^2 .
\end{align}
The corresponding inner products are denoted by $\langle \cdot, \cdot \rangle_{\cH_k}$.
\end{definition}

\begin{lemma}[Sobolev and Hardy's inequalities]\label{lem:Sobolev_Hardy}
	Let $c \in \cH_1$. Then the Sobolev inequality says
	\begin{equation}
		\| \nabla_0 c \| \geqslant \tfrac{3}{2} \| c \|_{L^6 (\mathbb{R}^3)},
	\end{equation}
	and Hardy's inequality says
	\begin{equation}
		\| \nabla_0 c \| \geqslant \tfrac{1}{2} \| \rho^{-1} c \|_{L^2 (\mathbb{R}^3)}.
	\end{equation}
\end{lemma}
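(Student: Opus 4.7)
The plan is to reduce both estimates, via density and Kato's inequality, to classical scalar inequalities on $\mathbb{R}^3$. Since $\cH_1$ is by definition the completion of $C_0^\infty$-sections in its defining norm, and both inequalities pass to the completion by a routine Cauchy-sequence argument (the right-hand side is controlled by the $\cH_1$-norm, so a Cauchy sequence in $\cH_1$ is Cauchy in the norms on both sides of the claimed estimate), it suffices to prove them for $c \in C_0^\infty(\mathbb{R}^3,(\Lambda^0 \oplus \Lambda^1) \otimes \g_P)$. The key tool is Kato's inequality: since $\nabla_0$ is a metric connection on the bundle in question, $\bigl|\nabla|c|\bigr| \leq |\nabla_0 c|$ pointwise away from the zero set of $c$, and as $|c|$ is Lipschitz with compact support, integrating yields $\|\nabla|c|\|_{L^2} \leq \|\nabla_0 c\|_{L^2}$. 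Both claims then follow by applying a classical scalar inequality to $u = |c|$.

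For the Hardy part, the classical scalar Hardy estimate $\||x|^{-1} u\|_{L^2} \leq 2\|\nabla u\|_{L^2}$ on $\mathbb{R}^3$ is obtained by integrating $\int u^2/|x|^2\, dx$ by parts against the vector field $x/|x|^2$, whose divergence on $\mathbb{R}^3 \setminus \{0\}$ equals $1/|x|^2$, and applying the Cauchy--Schwarz inequality. Since $\rho^{-1} \leq |x|^{-1}$ pointwise, combining this with Kato's inequality gives the stated bound. For the Sobolev part, the scalar estimate $\|u\|_{L^6} \leq \tfrac{2}{3}\|\nabla u\|_{L^2}$ on $\mathbb{R}^3$ is a standard Gagliardo--Nirenberg--Sobolev bound (a valid, though non-sharp, constant), and combining with Kato's inequality yields the stated Sobolev inequality for $c$.

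I anticipate no serious obstacle: the argument is a routine reduction via Kato to two textbook scalar inequalities. The only small check is that $\nabla_0$ is indeed metric on $(\Lambda^0 \oplus \Lambda^1) \otimes \g_P$, which holds by construction, since the bundle carries an invariant inner product preserved by $\nabla_0$, so Kato applies in the standard form used above.
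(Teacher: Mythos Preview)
Your argument is correct: the reduction via Kato's inequality to the scalar Hardy and Sobolev inequalities is the standard and valid route, and the constants you invoke are fine (the $2/3$ in the Sobolev estimate is indeed weaker than the sharp Talenti--Aubin constant, so it holds). The paper itself gives no proof of this lemma---it simply records the two inequalities as classical facts and moves on---so there is nothing to compare; your write-up supplies exactly the justification one would expect a reader to fill in.
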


\begin{remark}
	Let the symbol $\sim$ denote norm equivalence. Then using Hardy's inequality and \Cref{hypoth:monodecay} gives
	\begin{align}
		\| c \|_{\cH_1}^2 &\sim \| \nabla_0 c \|^2 + \| [\Phi_0, c] \|^2,  \\
		\| c \|_{\cH_2}^2 &\sim \| \nabla_0^2 c \|^2 + \| [\Phi_0, \nabla_0 c ] \|^2 + \| \rho^{-2} c \|^2.
	\end{align}
\end{remark}

\begin{lemma}[First order inequalities, partly in \cite{Taubes1983}*{Lemma~6.8}]\label{lem:Inequality_D}
	There are positive constants $R,C_{m_0}$, depending only on the monopole $m_0$, such that for all $c_0 \in \cH_1$
	\begin{align}
		\| c_0 \|_{\cH_1}^2 & \leqslant C_{m_0} \|D^*c_0 \|_{\cH_0}^2,  \\
		\| c_0 \|_{\cH_1}^2 & \leqslant C_{m_0} \left( \|Dc_0 \|_{\cH_0}^2 + \| c_0 \|_{L^2(B_{R}(0))}^2 \right),  \\
		\| c_0 \|_{\cH_2}^2 & \leqslant C_{m_0} \left( \|D^*c_0 \|_{\cH_1}^2 + \|c_0 \|_{L^2(B_{R}(0))}^2 \right),  \\
		\| c_0 \|_{\cH_2}^2 & \leqslant C_{m_0} \left( \|Dc_0 \|_{\cH_1}^2 + \|c_0 \|_{L^{2}(B_{R}(0))}^2 \right).
	\end{align}
\end{lemma}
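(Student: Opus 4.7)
The plan is to derive these four estimates from a Bochner--Weitzenb\"ock identity, lower-order corrections controlled by asymptotic decay, and commutator arguments. The two first-order estimates follow the method of \cite{Taubes1983}*{Lemma~6.8}; the two $\cH_2$ estimates come from applying the first-order estimates to derivatives of $c_0$ and controlling commutators.

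For the first estimate, integration by parts against $c_0$, together with the identity $DD^* c_0 = \nabla_0^* \nabla_0 c_0 - [[c_0, \Phi_0], \Phi_0]$ and the skew-adjointness of $\ad(\Phi_0)$, yields
\begin{equation*}
\|D^* c_0\|^2 = \langle DD^* c_0, c_0\rangle = \|\nabla_0 c_0\|^2 + \|[\Phi_0, c_0]\|^2.
\end{equation*}
Combined with Hardy's inequality (\Cref{lem:Sobolev_Hardy}) applied to the $\|\rho^{-1} c_0\|^2$ contribution to $\|c_0\|_{\cH_1}^2$, this gives the first estimate directly.

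For the second estimate, the identity $D^* D c_0 = DD^* c_0 + 2(d_{\nabla_0}\Phi_0)^W(c_0)$ gives
\begin{equation*}
\|D c_0\|^2 = \|\nabla_0 c_0\|^2 + \|[\Phi_0, c_0]\|^2 + 2\langle (d_{\nabla_0}\Phi_0)^W(c_0), c_0\rangle.
\end{equation*}
The cross term is handled by decomposing the domain into $B_R(0)$ and its complement. On $B_R(0)$ it is bounded by $C \|c_0\|_{L^2(B_R(0))}^2$ using the smoothness of the monopole. On $\mathbb{R}^3 \setminus B_R(0)$, the asymptotic decay $|d_{\nabla_0}\Phi_0| \leq C r^{-2}$ from \Cref{hypoth:monodecay} together with Hardy's inequality gives a bound by $\epsilon(R) \|\nabla_0 c_0\|^2$ with $\epsilon(R) \to 0$ as $R \to \infty$, which is absorbed into the left-hand side for $R$ sufficiently large.

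The second-order estimates are proved by applying the first-order ones to the derivative $\nabla_0 c_0$ (and similarly to $[\Phi_0, c_0]$) and then controlling the commutators. For the third estimate, the tensor-valued analog of the first estimate applied to $\nabla_0 c_0$ gives $\|\nabla_0 c_0\|_{\cH_1}^2 \leq C \|D^* \nabla_0 c_0\|^2$, and writing
\begin{equation*}
D^* \nabla_0 c_0 = \nabla_0 D^* c_0 + [D^*, \nabla_0] c_0,
\end{equation*}
the commutator $[D^*, \nabla_0]$ is a pointwise algebraic operator whose coefficients are built from $F_{\nabla_0}$ and $d_{\nabla_0}\Phi_0$, both decaying at least as $r^{-2}$ by \Cref{hypoth:monodecay}. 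Splitting its $L^2$ norm over $B_R(0)$ and its complement yields a compact $L^2$ error plus a decaying piece absorbable using Hardy's inequality combined with the first estimate. The $\|\rho^{-2} c_0\|^2$ contribution to $\|c_0\|_{\cH_2}^2$ is controlled using the coercivity of $\ad(\Phi_\infty)$ on the orthogonal complement of its kernel in the asymptotic region (provided by maximal symmetry breaking, \Cref{hypoth:maxsymbr}) together with Hardy's inequality on the complementary component. The fourth estimate follows identically, using the second estimate in place of the first.

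The main obstacle will be the commutator analysis in the second-order estimates: one must show that the curvature and $d_{\nabla_0}\Phi_0$ error terms introduced by commuting with $\nabla_0$ can be absorbed into the leading $\|\nabla_0^2 c_0\|^2$ and $\|[\Phi_0, \nabla_0 c_0]\|^2$ contributions (modulo a compact $L^2$ piece), which hinges on the $r^{-2}$ decay of $F_{\nabla_0}$ and $d_{\nabla_0}\Phi_0$ being strong enough to dominate the weights in the $\cH_1$ and $\cH_2$ norms.
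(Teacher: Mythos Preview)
Your approach is correct and follows the same overall strategy as the paper: the Weitzenb\"ock identities for $DD^*$ and $D^*D$ yield the first two inequalities, and the second-order estimates are obtained by applying the first-order ones to $\nabla_0 c_0$ and $[\Phi_0,c_0]$ and controlling the resulting commutators via the $r^{-2}$ decay of $F_{\nabla_0}$ and $d_{\nabla_0}\Phi_0$.

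Two execution details differ. For the exterior cross-term in the second inequality, the paper does not use the pointwise bound $|d_{\nabla_0}\Phi_0|\lesssim r^{-2}$ plus Hardy as you do; instead it exploits the $L^2$-smallness of $(1-\chi_R)d_{\nabla_0}\Phi_0$ together with the bilinear estimate of \Cref{lem:Multiplication_H_Spaces} (which itself uses maximal symmetry breaking). For the $\|\rho^{-2}c_0\|$ term in the second-order estimates, the paper avoids your parallel/perpendicular splitting and instead applies the first inequality a third time, to $\rho^{-1}c_0$: since $D^*(\rho^{-1}c_0)=\rho^{-1}D^*c_0 + O(\rho^{-2}|c_0|)$, one gets $\|\rho^{-2}c_0\|^2+\|[\Phi_0,\rho^{-1}c_0]\|^2 \lesssim \|\rho^{-1}D^*c_0\|^2 + \|\rho^{-2}c_0\|^2$, and a careful use of Young's inequality (with the sharp Hardy constant) lets the $\|\rho^{-2}c_0\|^2$ on the right be absorbed. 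This route does not require \Cref{hypoth:maxsymbr} at that step, so it is slightly more economical than your decomposition, though both work.
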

\begin{proof}
	We start by proving the first inequality, which follows from the Weitzenb\"ock type formula $DD^* c = \nabla_0^* \nabla_0 c - [[c, \Phi_0], \Phi_0] $ as follows
	\begin{equation}
		\| D^* c \|^2 = \langle c , DD^* c \rangle = \| \nabla_0 c \|^2 + \| [\Phi_0, c] \|^2 \sim \| c \|_{\cH_1}^2.
	\end{equation}
	The method for proving the second inequality is outlined in \cite{Taubes1983}. For completeness, we include here its proof using the strategy outlined in that reference. By \Cref{hypoth:monodecay}, we have that $\rho^2 d_{\nabla_0} \Phi_0 \in L^\infty$, and hence, if $c \in \cH_1$, then we can integrate by parts and thus
	\begin{equation}\label{eq:Dc_NormSquared}
	\begin{aligned}
		\| D c \|_{\cH_0}^2 &= \langle c, D^*D c \rangle = \langle c, \nabla_0^* \nabla_0 c - [[c, \Phi_0], \Phi_0] + 2(d_{\nabla_0} \Phi_0)^W(c) \rangle  \\ 
							&= \| \nabla_0 c \|^2 + \| [\Phi_0, c] \|^2 + 2 \langle c, (d_{\nabla_0} \Phi_0)^W c \rangle  \\
							&= \| c \|_{\cH_1}^2 + 2 \langle c, (d_{\nabla_0} \Phi_0)^W c \rangle.
	\end{aligned}
	\end{equation}
	Now, for $R \gg 1$ let $\chi_R$ be a smooth bump function supported in $B_{R}(0)$ and equal to $1$ in $B_{R-1}(0)$. Then,
	\begin{equation}
		\langle c, (d_{\nabla_0} \Phi_0)^W c \rangle =  \langle \chi_R c, (d_{\nabla_0} \Phi_0)^W c \rangle +  \langle (1-\chi_R)c, (d_{\nabla_0} \Phi_0)^W c \rangle,
	\end{equation}
	with the first terms satisfying 
	\begin{equation}
		| \langle \chi_R c, (d_{\nabla_0} \Phi_0)^W c \rangle | \leqslant \left( \sup_{x \in B_{R}(0) } |d_{\nabla_0} \Phi_0(x)| \right) \ \| c\|_{L^2(B_R(0))}^2.
	\end{equation}
	As for the second term, we may use the particular form of $(d_{\nabla_0} \Phi_0)^W c$ and the $\Ad$-invariance of the inner product to find a bilinear map $N(\cdot , \cdot)$ so that
	\begin{equation}
		| \langle (1-\chi_R)c, (d_{\nabla_0} \Phi_0)^W c \rangle| \lesssim \|(1-\chi_R)d_{\nabla_0} \Phi_0 \|_{L^2} \| N(c,c) \|_{L^2} \lesssim \|(1-\chi_R)d_{\nabla_0} \Phi_0 \|_{L^2} \| c \|_{\cH_1}^2,
	\end{equation}
	where in the last inequality we have used \Cref{lem:Multiplication_H_Spaces} to be proven later. Now, given that $d_{\nabla_0} \Phi_0 \in L^2$, for any positive $\epsilon \ll 1$ we may find $R=R_{\epsilon} \gg 1$ so that $\|(1-\chi_{R_{\epsilon}})d_{\nabla_0} \Phi_0 \|_{L^2} \leqslant \epsilon$ and so
	\begin{equation}
		| \langle (1-\chi_{R_{\epsilon}})c, (d_{\nabla_0} \Phi_0)^W c \rangle| \lesssim \epsilon \| c \|_{\cH_1}^2.
	\end{equation}
	Inserting these back into \cref{eq:Dc_NormSquared} we find
	\begin{equation}
		\| D c \|_{\cH_0}^2 \gtrsim (1-\epsilon) \| c \|_{\cH_1}^2 - \left( \sup_{x \in B_{R_{\epsilon} }(0) } |d_{\nabla_0} \Phi_0(x)| \right) \ \| c\|_{L^2(B_{R_{\epsilon}} (0))}^2,  \label[ineq]{ineq:Inequality_D_Intermediate}
	\end{equation}
	and rearranging yields the second inequality in the statement.

	We now turn to proving the last two inequalities, i.e. the last two ones. Using the first inequality just proved above we compute
	\begin{align}
		\| c \|_{\cH_2}^2 & \lesssim \| \nabla_0 c \|_{\cH_1}^2 + \| [ \Phi_0 , c] \|_{\cH_1}^2 + \| \rho^{-1}c \|_{\cH_1}^2  \\
		&\lesssim \| D^*\nabla_0 c \|_{\cH_0}^2 + \| D^* [ \Phi_0 , c] \|_{\cH_0}^2 + \| D^* \rho^{-1}c \|_{\cH_0}^2  \\
		&\lesssim \|\nabla_0 D^* c - B (\nabla_0 \Phi_0, c) \|_{\cH_0}^2 + \| [ \Phi_0, D^*c] + B (\nabla_0 \Phi_0, c) + [\Phi_0, [\Phi_0, c]] \|_{\cH_0}^2  \\
		&\quad + \| \rho^{-2} c + \rho^{-1} D^* c \|_{\cH_0}^2  \\
		&\lesssim \|\nabla_0 D^* c \|_{\cH_0}^2 + \| [ \Phi_0 , D^*c] \|_{\cH_0}^2 + \| [\Phi_0 , [\Phi_0 , c]] \|_{\cH_0}^2 + \| \rho^{-1}D^*c \|_{\cH_0}^2 + \| B (\nabla_0 \Phi_0, c) \|_{\cH_0}^2  \\
		&\quad + \| \rho^{-2} c \|_{\cH_0}^2  \\ \label[ineq]{ineq:Inequality_Dc_H1_Intermediate}
		&\lesssim \| D^* c \|_{\cH_1}^2  + \| [\Phi_0 , [ \Phi_0 , c ] ] \|_{\cH_0}^2 + \| \rho^{-2} c \|_{\cH_0}^2 ,
	\end{align}
	where $B (-, -)$ denotes a bilinear operator which is algebraic, and thus continuous. Now, using the definition of the $\cH_1$-norm we have
	\begin{equation}\label{eq:Norm_H1_D^*c_Intermediate}
		\| D^* c \|_{\cH_1}^2 = \| \nabla_0 D^*c \|^2 + \| [\Phi_0, D^* c] \|^2 ,
	\end{equation}
	and we consider each of these separately. For the first term we use the Hardy's inequality together with Young's inequality in the form $2 \langle D^* (\rho^{-1}  c) , \rho^{-2} d \rho \otimes c\rangle \leqslant \sqrt{2} \| D^* (\rho^{-1}  c) \| + \tfrac{1}{\sqrt{2}} \| \rho^{-2}  c \|^2$
	\begin{align}
		\| \nabla_0 D^*c \|^2 & \gtrsim \| \rho^{-1} D^* c \|^2 \gtrsim \| D^* (\rho^{-1}  c) - \rho^{-2} d \rho \otimes c \|^2 \\
		& \gtrsim (1- \sqrt{2})\| D^* (\rho^{-1}  c) \|^2 + (1-\tfrac{1}{ \sqrt{2}})\| \rho^{-2}  \otimes c \|^2 \\	
		& \gtrsim \frac{3 - \delta - 2 \sqrt{2}}{2}  \|\rho^{-2}  c \|^2  + \delta \| \nabla (\rho^{-1}  c)  \|^2 + \delta \| [ \Phi_0 , \rho^{-1}  c ] \|^2 \\
		& \gtrsim  \|\rho^{-2}  c \|^2  + \| \nabla (\rho^{-1}  c)  \|^2 + \| [ \Phi_0 , \rho^{-1}  c ] \|^2 ,	
	\end{align}
	for some fixed $\delta \in (0,3-2\sqrt{2})$.
	As for the second term, we use \cref{hypoth:monodecay}, namely that $d_{\nabla_0} \Phi_0 =O(\rho^{-2})$, and an argument as that made above to control $\langle ( d_{\nabla_0} \Phi_0 )^{W} c, c \rangle$
	\begin{align}
		\| [\Phi_0, D^*c] \|^2 & \gtrsim \| D^* [ \Phi_0 , c ] - (d_{\nabla_0} \Phi_0  )^{W} c \|^2	 \\
		& \gtrsim \| D^* [ \Phi_0 , c ] \|^2  - \|( d_{\nabla_0} \Phi_0  )^{W} c \|^2 \\
		& \gtrsim \| [\Phi_0 , [ \Phi_0 , c ] ] \|^2  - \langle ( d_{\nabla_0} \Phi_0 )^{W} \rho^{-1}c, \rho^{-1} c \rangle \\
		& \gtrsim \| [\Phi_0 , [ \Phi_0 , c ] ] \|^2 - \| c \|_{L^2(B_{R_{\epsilon}})}^2 - \epsilon \| \rho^{-1} c \|^2_{\cH_1} . 
	\end{align}
	We now sum these inequalities, i.e. insert them back into \cref{eq:Norm_H1_D^*c_Intermediate}, and recall that $ \| \nabla_0( \rho^{-1} c ) \|^2 + \| [\Phi_0, \rho^{-1} c]  \|^2 = \| \rho^{-1} c \|^2_{\cH_1} $ to obtain
	\begin{align}
		\| D^* c \|_{\cH_1}^2 & \gtrsim \|\rho^{-2}  c \|^2  + \| [\Phi_0 , [ \Phi_0 , c ] ] \|^2 +\| \rho^{-1} c \|^2_{\cH_1} - \| c \|_{L^2(B_{R_{\epsilon}})}^2 - \epsilon \| \rho^{-1} c \|^2_{\cH_1} \\
		&\gtrsim \|\rho^{-2}  c \|^2  + \| [\Phi_0 , [ \Phi_0 , c ] ] \|^2 +\| \rho^{-1} c \|^2_{\cH_1} - \| c \|_{L^2(B_{R_{\epsilon}})}^2 ,
	\end{align}
	where we have chosen $\epsilon>0$ sufficiently small so it may be absorbed. Then, rearranging we obtain
	\begin{equation}
		\|\rho^{-2}  c \|^2 + \| [\Phi_0 , [ \Phi_0 , c ] ] \|^2 +\| \rho^{-1} c \|^2_{\cH_1} \lesssim  \| D^* c \|_{\cH_1}^2 + \| c \|_{L^2(B_{R})}^2,
	\end{equation}
	for some $R>0$. Then, inserting this into the \cref{ineq:Inequality_Dc_H1_Intermediate} we obtain
	\begin{equation}
		\| c \|^2_{\cH_2} \lesssim \| D^* c \|_{\cH_1}^2 + \| c \|_{L^2(B_{R})}^2,
	\end{equation}
	which proves the third inequality in the statement. The proof of the last one follows from a very similar computation, which we omit.
\end{proof}

\begin{lemma}[Second order inequalities]
	There is $C>0$ depending only on the monopole $m_0$ so that for all $c_0 \in \cH_2$
	\begin{subequations}
	\begin{align}
		\| c_0 \|_{\cH_2}^2 & \leqslant C \ \left( \|DD^*c_0 \|_{\cH_0}^2 +  \| c_0 \|_{L_1^{2} (B_{R_{\epsilon}}(0))}^2 \right),  \label[ineq]{ineq:Inequalities_Second_Order_1}  \\
		\| c_0 \|_{\cH_2}^2 & \leqslant C \ \left(  \|D^*Dc_0 \|_{\cH_0}^2 + \| c_0 \|_{L^2 (B_{R_{\epsilon}}(0))}^2 \right).  \label[ineq]{ineq:Inequalities_Second_Order_2}
	\end{align}
	\end{subequations}
\end{lemma}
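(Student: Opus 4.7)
The plan is to chain the first-order estimates of \Cref{lem:Inequality_D} by applying them twice---once to $c_0$ and once to its image under $D$ or $D^*$.

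For \cref{ineq:Inequalities_Second_Order_2}, I would first apply the fourth inequality of \Cref{lem:Inequality_D} to control $\|c_0\|_{\cH_2}^2$ by $\|Dc_0\|_{\cH_1}^2$ up to a local $L^2$-error on a fixed ball. Since $c_0 \in \cH_2$ we have $Dc_0 \in \cH_1$, so applying the first inequality of \Cref{lem:Inequality_D} to $Dc_0$ gives $\|Dc_0\|_{\cH_1}^2 \lesssim \|D^*Dc_0\|_{\cH_0}^2$. Combining the two bounds yields \cref{ineq:Inequalities_Second_Order_2}. The crucial point is that the first inequality of \Cref{lem:Inequality_D}---coming from the Weitzenb\"ock identity $DD^* = \nabla_0^*\nabla_0 - \ad(\Phi_0)^2$ via integration by parts---carries no local error term, so no additional $L^2$ contribution beyond the one already present is introduced.

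For \cref{ineq:Inequalities_Second_Order_1} I would swap the roles of $D$ and $D^*$: apply the third inequality of \Cref{lem:Inequality_D} to $c_0$, and then the second inequality of \Cref{lem:Inequality_D} to $D^*c_0 \in \cH_1$. This produces an extra term $\|D^*c_0\|_{L^2(B_R(0))}^2$ that must be absorbed. Since $D^*$ is a first-order linear differential operator with coefficients that are smooth on a fixed ball, one has the trivial bound $\|D^*c_0\|_{L^2(B_R(0))} \lesssim \|c_0\|_{L_1^2(B_R(0))}$, which explains the appearance of $L_1^2$ rather than $L^2$ in the error term of \cref{ineq:Inequalities_Second_Order_1}.

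The only subtlety is the asymmetry between the two resulting estimates, which simply mirrors the asymmetry of \Cref{lem:Inequality_D} itself: the Weitzenb\"ock identity for $DD^*$ is manifestly positive and gives a clean bound on $D^*c$, whereas $D^*D$ differs by the curvature-type operator $2(d_{\nabla_0}\Phi_0)^W$, and controlling that term in the proof of \Cref{lem:Inequality_D} forces the introduction of an interior error. Beyond choosing the right order in which to chain the first-order estimates, I do not anticipate any real obstacle in the argument.
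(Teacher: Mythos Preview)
Your proposal is correct and matches the paper's proof essentially verbatim: the paper too chains the third/fourth inequalities of \Cref{lem:Inequality_D} with the second/first ones, and absorbs the resulting $\|D^*c_0\|_{L^2(B_R(0))}^2$ into $\|c_0\|_{L_1^2(B_R(0))}^2$ exactly as you describe. Your explanation of the $L^2$ versus $L_1^2$ asymmetry is also on point.
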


\begin{proof}
	The last two inequalities in the statement of \Cref{lem:Inequality_D} yield 
	\begin{align}
		\| c_0 \|_{\cH_2}^2 & \lesssim  \|D^*c_0 \|_{\cH_1}^2 + \| c_0 \|_{L^2(B_{R}(0))}^2 \\
		\| c_0 \|_{\cH_2}^2 & \lesssim  \|Dc_0 \|_{\cH_1}^2 + \| c_0 \|_{L^2(B_{R}(0))}^2 .
	\end{align}
	Composing these with the first two in \Cref{lem:Inequality_D} yields
	\begin{align}
		\| c_0 \|_{\cH_2}^2 & \lesssim  \|DD^*c_0 \|_{\cH_0}^2 +  \| D^*c_0 \|_{L^2(B_{R}(0))}^2 + \| c_0 \|_{L^2(B_{R}(0))}^2 \\ 
		\| c_0 \|_{\cH_2}^2 & \lesssim   \|D^*Dc_0 \|_{\cH_0}^2 + \| c_0 \|_{L^2(B_{R}(0))}^2 ,
	\end{align}  
	which yields the stated inequalities.
\end{proof}

\begin{corollary}\label{cor:Fredholm}
	For $k = 1,2$, the operators $D, \ D^* : \cH_{k + 1} \rightarrow \cH_k$ are continuous and Fredholm. In particular $DD^*, \ D^*D : \cH_2 \rightarrow \cH_0$ are also continuous and Fredholm.\footnote{When $D$ and $D^*$ are considered as maps from $L_1^2 (\mathbb{R}^3)$ to $L^2 (\mathbb{R}^3)$, \Cref{cor:Fredholm} does not hold, because of the failure of the Rellich Lemma for unbounded domains.}
\end{corollary}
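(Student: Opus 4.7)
The plan is to split the claim into three parts: (i) boundedness of $D, D^*: \cH_{k+1} \to \cH_k$; (ii) the Fredholm property of $D$ and $D^*$ themselves; and (iii) the Fredholm property of the compositions.

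Boundedness is essentially a book-keeping exercise given the design of the $\cH_k$ norms. Since $Dc$ and $D^*c$ are algebraic combinations of $\nabla_0 c$, $[\Phi_0, c]$ and (for $D^*c$) derivatives hitting $c$ once, the first-order terms in $Dc$ (resp.\ $D^*c$) are controlled, term by term, by $\|\nabla_0 c\|$ and $\|[\Phi_0,c]\|$, each of which appears in $\|c\|_{\cH_1}$; and after one further differentiation we land in $\cH_0$. The only nontrivial contributions come from $\nabla_0 \Phi_0$ cross-terms, which are bounded by \Cref{hypoth:monodecay} (\emph{viz.} $\rho^2 d_{\nabla_0}\Phi_0 \in L^\infty$) combined with Hardy's inequality. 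This gives the required operator norm bounds $\cH_{k+1} \to \cH_k$ for $k=1,2$.

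The heart of the matter is the Fredholm property, and here the strategy is the standard one: elliptic estimate plus local compactness. The first- and second-order inequalities already stated give, for $k=1,2$,
\begin{equation*}
    \|c\|_{\cH_{k+1}}^2 \leqslant C_{m_0}\bigl(\|Dc\|_{\cH_k}^2 + \|c\|_{L^2(B_R)}^2\bigr)
    \quad\text{and}\quad
    \|c\|_{\cH_{k+1}}^2 \leqslant C_{m_0}\bigl(\|D^*c\|_{\cH_k}^2 + \|c\|_{L^2(B_R)}^2\bigr),
\end{equation*}
so it suffices to show the inclusion $\cH_{k+1} \hookrightarrow L^2(B_R)$ is compact. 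On the bounded region $B_R$ the weight $\rho$ is bounded above and below, so $\cH_{k+1}|_{B_R}$ is continuously embedded in $L^2_{k+1}(B_R)$; then Rellich--Kondrachov on the bounded domain $B_R$ yields the desired compactness. A standard functional-analytic consequence of such a semi-elliptic estimate is that $D$ (resp.\ $D^*$) has finite-dimensional kernel and closed range. Applying the same argument with the roles of $D$ and $D^*$ interchanged, and identifying $\coker(D: \cH_{k+1}\to\cH_k)$ with $\ker(D^*)$ via the $\cH_k$ inner product (after the usual adjoint computation in weighted spaces), we conclude that $D$ and $D^*$ are Fredholm.

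For the compositions $DD^*$ and $D^*D : \cH_2 \to \cH_0$, boundedness is immediate from part (i) and the Fredholm property follows either by invoking the fact that the composition of Fredholm operators is Fredholm, or, more self-containedly, directly from the second-order inequalities combined with the compactness of $\cH_2 \hookrightarrow L^2(B_R)$ (and, for $DD^*$, the analogous compactness of $\cH_2 \hookrightarrow L^2_1(B_R)$, which follows from the same Rellich argument one degree higher). The main conceptual obstacle is, as flagged in the footnote, that Rellich fails on the unbounded domain $\mathbb{R}^3$; the whole point of the weighted $\cH_k$ norms is precisely to supply the decay at infinity that compensates, so that only compactness on the fixed ball $B_R$ appearing in the elliptic estimates is actually required.
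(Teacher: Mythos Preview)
Your proposal is correct and follows essentially the same route as the paper: invoke the first- and second-order inequalities together with Rellich compactness on the bounded ball $B_R$ to obtain finite kernel and closed range, then handle the cokernel by swapping the roles of $D$ and $D^*$. The paper is marginally more explicit on the cokernel step, observing that the inequalities themselves force $\ker_{L^2}(D^*)$ and $\ker_{L^2}(D)$ to lie in $\cH_1$, which is precisely what justifies identifying the cokernel with the kernel of the formal ($L^2$-) adjoint rather than needing to compute a Hilbert-space adjoint in the weighted $\cH_k$ inner product.
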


\begin{proof}
	The continuity of these operators is immediate so we focus on the Fredholm property. The inequalities in \Cref{lem:Inequality_D} together with the compactness of the embedding $\cH_1 \hookrightarrow L^2(B_{R_{\epsilon}})$, imply that the operators $D,D^* : \cH_1 \rightarrow \cH_0$ have finite dimensional kernel and closed image. Similarly, these inequalities show that both $\ker_{L^2} (D^*)$ and $\ker_{L^2}(D)$ are contained in $\cH_1$. Hence, these can be respectively identified with the cokernel of the operators $D, D^* : \cH_1 \rightarrow \cH_0$, and so their cokernels are also finite dimensional. Putting all these facts together follows that the mentioned first order operators are Fredholm.\\
	In order to prove that the second order operators $DD^*, \ D^*D : \cH_2 \rightarrow \cH_0$ are Fredholm is enough that $D$ and $D^*$ also be Fredholm when defined from $\cH_2$ to $\cH_1$, which can be done through a very similar computation. Alternatively, it follows from the same argument as above, but using \cref{ineq:Inequalities_Second_Order_1,ineq:Inequalities_Second_Order_2} instead.
\end{proof}

\subsection{A Gap Theorem}\label{subsec:Gap_DD}

In the proof of \Cref{lem:Inequality_D} we saw that  
\begin{equation}
	\| D^* c \|^2 = \| \nabla_0 c \|^2 + \| [\Phi_0, c] \|^2,
\end{equation}
which implies the operator $D^* : \cH_1 \rightarrow \cH_0$ is injective. Indeed, from \Cref{lem:Inequality_D}, the inequality
\begin{equation}\label[ineq]{ineq:D^*_Gap}
	\| D^* c \|^2_{\cH_0} \gtrsim  \|  c \|^2_{\cH_1} ,
\end{equation}
holds for any $c \in \cH_1$. Thus, $DD^* : \cH_2 \rightarrow \cH_0$ is also injective, and, since it is a formally self-adjoint elliptic operator, its spectrum is gapped.

\begin{theorem}[Gap Theorem]
There is a constant $C > 0$, possibly depending on the monopole $m_0 = (\nabla_0, \Phi_0)$, such that
\begin{equation}
	\| DD^* c \|^2_{\cH_0} \geqslant C \|  c \|^2_{\cH_2}.  \label[ineq]{ineq:gap}
\end{equation}
\end{theorem}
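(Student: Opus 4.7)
The plan is to deduce the gap inequality from two facts already in hand: the Fredholm property of $DD^*\colon \cH_2 \to \cH_0$ provided by \Cref{cor:Fredholm}, and the injectivity of $D^*$ on $\cH_1$ expressed by \cref{ineq:D^*_Gap}. The guiding principle is the basic functional-analytic fact that a bounded Fredholm operator with trivial kernel is bounded below, via the bounded inverse theorem on its closed range.

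First I would verify that $DD^*\colon \cH_2 \to \cH_0$ is injective. Given $c \in \cH_2$ with $DD^*c = 0$, the continuous inclusion $\cH_2 \hookrightarrow \cH_1$ lets us pair with $c$ in $\cH_0$. Integrating by parts (valid because elements of $\cH_2$ can be approximated by compactly supported forms and the asymptotics of $m_0$ in \Cref{hypoth:monodecay} kill the boundary terms at infinity) yields
\begin{equation*}
0 = \langle DD^*c, c \rangle_{\cH_0} = \|D^*c\|_{\cH_0}^2,
\end{equation*}
so \cref{ineq:D^*_Gap} forces $c = 0$. Combining injectivity with \Cref{cor:Fredholm}, the image of $DD^*$ is a closed subspace of $\cH_0$, and $DD^*$ defines a continuous linear bijection from $\cH_2$ onto this subspace. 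The bounded inverse theorem then furnishes a constant $C > 0$ with $\|c\|_{\cH_2} \leqslant C^{-1/2}\|DD^*c\|_{\cH_0}$, which is exactly \cref{ineq:gap}.

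As a more hands-on alternative, one can run the standard contradiction/compactness argument using \cref{ineq:Inequalities_Second_Order_1} directly. If the gap were to fail there would exist a sequence $c_n \in \cH_2$ with $\|c_n\|_{\cH_2} = 1$ and $\|DD^*c_n\|_{\cH_0} \to 0$; applying \cref{ineq:Inequalities_Second_Order_1} to the differences $c_n - c_m$ together with the Rellich compactness of the embedding $\cH_2 \hookrightarrow L_1^2(B_R)$ produces an $\cH_2$-Cauchy subsequence whose limit $c$ satisfies $\|c\|_{\cH_2} = 1$ and $DD^*c = 0$, contradicting the injectivity established above. Either way, the main technical point to verify is the integration-by-parts step used to pass from $DD^*c = 0$ to $D^*c = 0$; this is the only place where one needs to check that the weighted spaces $\cH_k$ really do make $D$ and $D^*$ formal adjoints without a leftover boundary contribution at infinity, and this is guaranteed by the definitions of the $\cH_k$ together with \Cref{hypoth:monodecay}.
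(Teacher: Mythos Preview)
Your proposal is correct. Your second (contradiction) argument is essentially the paper's own proof: the paper also assumes a normalized sequence with $\|DD^*c_i\|_{\cH_0}\to 0$, uses the compact embedding $\cH_2\hookrightarrow L_1^2(B_R)$ together with \cref{ineq:Inequalities_Second_Order_1} to upgrade to strong $\cH_2$-convergence, and reaches a contradiction with $\ker_{\cH_2}(DD^*)\subseteq\ker_{\cH_2}(D^*)=\{0\}$. The only cosmetic difference is that the paper first passes to a weak limit (which must be $0$ by injectivity) and then upgrades, whereas you apply the inequality to differences and extract a Cauchy subsequence directly; these are interchangeable.

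Your first route---injectivity plus Fredholmness from \Cref{cor:Fredholm}, then the bounded inverse theorem on the closed range---is a legitimate and slightly slicker alternative that the paper does not spell out for the Gap Theorem itself, though it immediately invokes the same principle (via Lax--Milgram) in the subsequent \Cref{cor:Green} to produce the Green operator. So your abstract argument effectively merges the paper's Gap Theorem and \Cref{cor:Green} into one step. The integration-by-parts justification you flag is exactly the content of the paragraph preceding the theorem in the paper (the Weitzenb\"ock identity $\|D^*c\|^2=\|\nabla_0 c\|^2+\|[\Phi_0,c]\|^2$), so you are right that this is the only point requiring care, and it is already in hand.
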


\begin{proof}
The proof of this assertion is a standard argument by contradiction using the fact that $\ker_{\cH_2} (DD^*) \subseteq \ker_{\cH_2} (D^*) = \{ 0 \}$. Indeed, if \cref{ineq:gap} is not true, then there is a sequence $c_i \in \cH_2$ such that $\| c_i \|_{\cH_2}=1$ and $ \| DD^*c_i \|_{\cH_0}^2 \rightarrow 0$. Since the sequence $\{ c_i \}$ is bounded in $\cH_2$ by assumption, there is a weak $\cH_2$-limit, say $c_\infty \in \cH_2$, which satisfies $DD^* c_\infty = 0$. As $DD^*$ has no $\cH_2$-kernel, we have $c_\infty = 0$, that is $c_i$ converges weakly to 0.

Now, consider any bounded domain $\Omega \subset \mathbb{R}^3$, the embedding $\cH_2 (\Omega) \hookrightarrow L_1^2 (\Omega)$ is compact, $c_i \rightarrow c_\infty=0$ strongly in $L_1^2 (\Omega)$. Putting this together with \cref{ineq:Inequalities_Second_Order_1} for $DD^*$ follows that 
\begin{equation}
	\| c_i \|_{\cH_2}^2  \leqslant C \ \left( \|DD^*c_i \|_{\cH_0}^2 +  \| c_i \|_{L_1^2 (B_R(0))}^2 \right) .
\end{equation}  
Taking the limit as $i \rightarrow \infty$, the right hand side converges to zero, so we also have
\begin{equation}
	\lim\limits_{i \rightarrow \infty} \|c_i \|_{\cH_2} = 0,
\end{equation}
which contradicts $\| c_i \|_{\cH_2} = 1$.
\end{proof}

Using the Lax--Milgram Theorem, we immediately conclude the following:

\begin{corollary}[Green operator of $DD^*$]\label{cor:Green}
	There is continuous linear map 
	\begin{equation}
		G: \cH_0 \rightarrow \cH_2,
	\end{equation}
	such that $G \circ DD^* = \id_{\cH_2}$.
\end{corollary}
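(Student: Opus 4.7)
The plan is to realize $G$ as the bounded left inverse of $DD^*$ on its closed image, extended by zero on the orthogonal complement. Two previously established inputs make this essentially automatic: the Gap Theorem supplies the coercivity estimate $\|DD^* u\|_{\cH_0}^2 \geqslant C \|u\|_{\cH_2}^2$ for all $u \in \cH_2$, and \Cref{cor:Fredholm} asserts that $DD^* \co \cH_2 \to \cH_0$ is Fredholm, so in particular it has closed range.

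First I would set $V := \im (DD^*) \subseteq \cH_0$, which is closed by the Fredholm property (closedness also follows directly from the gap inequality together with completeness of $\cH_2$: a Cauchy sequence of images pulls back to a Cauchy sequence of preimages). The gap inequality forces $DD^*$ to be injective on $\cH_2$, so the algebraic inverse $\wtilde G \co V \to \cH_2$ defined by $\wtilde G (DD^* u) = u$ is well-defined, and the same inequality yields $\|\wtilde G(w)\|_{\cH_2} \leqslant C^{-1/2} \|w\|_{\cH_0}$ for $w \in V$, so $\wtilde G$ is continuous.

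Second, using the Hilbert space structure of $\cH_0$, I would let $P_V \co \cH_0 \to V$ denote the orthogonal projection onto the closed subspace $V$ and set $G := \wtilde G \circ P_V \co \cH_0 \to \cH_2$. This is a composition of bounded linear maps, hence bounded; and for every $u \in \cH_2$ one has $DD^* u \in V$, so $G(DD^* u) = \wtilde G(DD^* u) = u$, which gives $G \circ DD^* = \id_{\cH_2}$ as required.

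I do not anticipate a genuine obstacle; the content is packaged entirely in the Gap Theorem and \Cref{cor:Fredholm}, and what remains is the standard Hilbert-space recipe for constructing a bounded left inverse. The invocation of the Lax--Milgram theorem suggested in the excerpt can be read as applying that theorem to the continuous coercive symmetric bilinear form $a(u,v) = \langle D^* u, D^* v \rangle_{\cH_0}$ on $\cH_1$ and then appealing to elliptic regularity to upgrade the solution to $\cH_2$; the closed-range/orthogonal-projection route described above is essentially equivalent, with the mild benefit of producing $G$ directly on all of $\cH_0$ without requiring any auxiliary decay assumption on the right-hand side.
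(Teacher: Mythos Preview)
Your argument is correct. The paper's own ``proof'' is the single clause ``Using the Lax--Milgram Theorem, we immediately conclude the following,'' so there is little to compare at the level of detail; the substantive input in both cases is the coercivity estimate from the Gap Theorem. Your closed-range/orthogonal-projection construction is the bounded-inverse-theorem route rather than Lax--Milgram proper, but as you yourself note in the final paragraph the two are equivalent here, and your version has the minor advantage of producing $G$ on all of $\cH_0$ in one step without a separate regularity upgrade from $\cH_1$ to $\cH_2$.
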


\subsection{Multiplication properties of the function spaces}\label{subsec:Multiplication_Properties}

\begin{lemma}\label{lem:Multiplication_L2}
	Let 
	\begin{equation}
		N_0 : ( \Lambda^1 \oplus \Lambda^0 ) \oplus ( \Lambda^1 \oplus \Lambda^0 ) \rightarrow \mathbb{R},
	\end{equation}
	be a bilinear map whose norm is pointwise uniformly bounded, that is there is a positive constant $C$, such that for all $x \in \mathbb{R}^3$, and all $\gamma_1, \gamma_2 \in \Lambda_x^1 \oplus \Lambda_x^0$
	\begin{equation}
		| N_0 (\gamma_1, \gamma_2) | \leqslant C |\gamma_1| |\gamma_2|.
	\end{equation}
	Then, for any connection $\nabla$, there is some other constant $C'$ such that if now $c_1, c_2$ are $L_1^2$ sections, then
	\begin{equation}
		\| N_0 (\gamma_1, \gamma_2) \| \leqslant C' \| \nabla \gamma_1 \| (\| \gamma_2 \| + \| \nabla \gamma_2 \|).
	\end{equation}
\end{lemma}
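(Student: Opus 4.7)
The plan is a short Sobolev multiplication estimate on $\mathbb{R}^{3}$. First, the pointwise hypothesis on $N_{0}$ reduces the claim to bounding $\||\gamma_{1}||\gamma_{2}|\|_{L^{2}}$: indeed
\begin{equation*}
  \|N_{0}(\gamma_{1},\gamma_{2})\|_{L^{2}} \;\leqslant\; C\,\bigl\| |\gamma_{1}||\gamma_{2}|\bigr\|_{L^{2}}.
\end{equation*}
Next, I would apply H\"older's inequality with dual exponents $p=6$ and $q=3$, so that $\tfrac{1}{6}+\tfrac{1}{3}=\tfrac{1}{2}$, obtaining
\begin{equation*}
  \bigl\| |\gamma_{1}||\gamma_{2}|\bigr\|_{L^{2}} \;\leqslant\; \|\gamma_{1}\|_{L^{6}}\,\|\gamma_{2}\|_{L^{3}}.
\end{equation*}

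For the $L^{6}$ factor I would use the Sobolev inequality of \Cref{lem:Sobolev_Hardy} combined with Kato's inequality $|d|\gamma_{1}|| \leqslant |\nabla\gamma_{1}|$ (which holds for any metric connection) to conclude
\begin{equation*}
  \|\gamma_{1}\|_{L^{6}} \;\lesssim\; \bigl\|\nabla|\gamma_{1}|\bigr\|_{L^{2}} \;\leqslant\; \|\nabla\gamma_{1}\|_{L^{2}}.
\end{equation*}
For the $L^{3}$ factor I would interpolate between $L^{2}$ and $L^{6}$: since $\tfrac{1}{3}=\tfrac{1}{2}\cdot\tfrac{1}{2}+\tfrac{1}{2}\cdot\tfrac{1}{6}$, H\"older gives
\begin{equation*}
  \|\gamma_{2}\|_{L^{3}} \;\leqslant\; \|\gamma_{2}\|_{L^{2}}^{1/2}\,\|\gamma_{2}\|_{L^{6}}^{1/2} \;\lesssim\; \|\gamma_{2}\|_{L^{2}}^{1/2}\,\|\nabla\gamma_{2}\|_{L^{2}}^{1/2},
\end{equation*}
the last step again invoking Sobolev via Kato. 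A single application of the AM--GM inequality $a^{1/2}b^{1/2}\leqslant \tfrac{1}{2}(a+b)$ then yields
\begin{equation*}
  \|\gamma_{2}\|_{L^{3}} \;\lesssim\; \|\gamma_{2}\|_{L^{2}} + \|\nabla\gamma_{2}\|_{L^{2}}.
\end{equation*}
Multiplying the two Sobolev bounds gives the claim, with $C'$ depending only on $C$ and the universal Sobolev constant.

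There is no genuine obstacle; the only subtlety worth flagging is that the Sobolev inequality in \Cref{lem:Sobolev_Hardy} is stated for the background monopole connection $\nabla_{0}$ on elements of $\cH_{1}$, whereas the lemma as stated allows an arbitrary connection $\nabla$ acting on $L_{1}^{2}$ sections. The appeal to Kato's inequality handles this: the scalar function $|\gamma|\in L_{1}^{2}(\mathbb{R}^{3})$ satisfies the ordinary Sobolev inequality on $\mathbb{R}^{3}$ by a density argument from $C_{c}^{\infty}$, and $|d|\gamma||\leqslant |\nabla\gamma|$ pointwise a.e., so the connection-dependent constants disappear.
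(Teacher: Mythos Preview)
Your proof is correct and essentially the same as the paper's: both reduce to bounding $\||\gamma_1||\gamma_2|\|_{L^2}$ and then combine H\"older, $L^p$-interpolation, and the Sobolev embedding $L_1^2 \hookrightarrow L^6$ on $\mathbb{R}^3$. The only cosmetic difference is the order of operations---the paper first interpolates $\|fg\|_{L^2}$ between $\|fg\|_{L^{3/2}}$ and $\|fg\|_{L^3}$ and then applies H\"older to each, whereas you apply H\"older with exponents $(6,3)$ first and then interpolate $\|\gamma_2\|_{L^3}$ between $L^2$ and $L^6$; both routes land on the same bound $\|\gamma_1\|_{L^6}\|\gamma_2\|_{L^2}^{1/2}\|\gamma_2\|_{L^6}^{1/2}$, and your explicit invocation of Kato's inequality is a welcome clarification the paper leaves implicit.
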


\begin{proof}
	Given that $\tfrac{3}{2} < 2 < 3$ and using the H\"older's inequality twice and then the Sobolev inequality from \Cref{lem:Sobolev_Hardy}, we have
	\begin{align}
		\| N_0 (\gamma_1, \gamma_2) \| &\lesssim \| \ |\gamma_1| \ |\gamma_2| \ \|  \\
		&\lesssim \| \ |\gamma_1| \ |\gamma_2| \ \|_{L^3} + \| \ |\gamma_1| \ |\gamma_2| \ \|_{L^{3/2}} \\
		&\lesssim \| \gamma_1 \|_{L^6} \| \gamma_2 \|_{L^6} + \| \gamma_1 \|_{L^6} \| \gamma_2 \|_{L^2}  \\
		&\lesssim \| \nabla \gamma_1 \| \ \| \nabla \gamma_2 \| + \| \nabla \gamma_1 \| \ \| \gamma_2 \|,
	\end{align}
	which concludes the proof.
\end{proof}

The nonlinearities of the Haydys monopole \cref{eq:Haydys_Mono_1,eq:Haydys_Mono_2,eq:Haydys_Mono_3} are quadratic, but come composed with the Lie algebra bracket $[\cdot, \cdot]$ acting in the $\g$-valued components. Thus, given a quadratic map $N_0$ as in \Cref{lem:Multiplication_L2}, the maps under consideration are of the form $N (c_1, c_2)$, that is if for $i=1,2$: $c_i = s_i \otimes \gamma_i$ with $s_i \in \g$, $\gamma_i \in \Lambda^0 \oplus \Lambda^1$ we have 
\begin{equation}
	N (c_1, c_2) = [s_1, s_2] \otimes N_0 (\gamma_1, \gamma_2).
\end{equation}
In that context, and in terms of the $\cH_k$-norms, the result in the \Cref{lem:Multiplication_L2} must be rephrased, which requires some preparation. For instance, recall that given a finite energy monopole $m_0=(\nabla_0,\Phi_0)$, there is $\Phi_\infty: \mathbb{S}^2 \rightarrow \g - \lbrace 0 \rbrace$ so that $\lim_{\rho \rightarrow \infty} \Phi_0 |_{\mathbb{S}^2_\rho} = \Phi_\infty$ uniformly. Recall that a monopole $m_0$ is said to have maximal symmetry breaking, if $\ker(\ad (\Phi_\infty))$ is Abelian.

\begin{lemma}[Multiplication properties of the $\cH_k$-spaces]\label{lem:Multiplication_H_Spaces}
	Let $m_0 = (\nabla_0,m_0)$ be a monopole with maximal symmetry breaking and $N$ a quadratic map as above. Then there exists $C_{m_0} > 0$, possibly depending on the monopole $m_0 = (\nabla_0,\Phi_0)$, such that
	\begin{equation}
		\| N (c_1, c_2) \|_{\cH_0} \leqslant C_{m_0} \|c_1 \|_{\cH_1} \|c_2 \|_{\cH_1}.
	\end{equation}
\end{lemma}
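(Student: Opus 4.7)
The core difficulty is that membership in $\cH_1$ only controls $\rho^{-1} c$ in $L^2$, not $c$ itself, so \Cref{lem:Multiplication_L2} does not apply directly to $N(c_1, c_2)$. The key observation is that the Lie-bracket structure of $N$, combined with maximal symmetry breaking, supplies the missing $L^2$ control outside a compact set.

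I would first choose $R > 0$ large enough so that on $\mathbb{R}^3 \setminus B_R(0)$ the field $\Phi_0$ is so close to $\Phi_\infty$ (via \Cref{hypoth:monodecay}) that, using \Cref{hypoth:maxsymbr}, $\ker \ad \Phi_0(x)$ is a Cartan subalgebra and the smallest nonzero eigenvalue of $\ad \Phi_0(x)$ is uniformly bounded below by some $c_0 > 0$. On this region, orthogonally decompose $\g_P = \g_P^0 \oplus \g_P^\perp$ and write $c_i = c_i^0 + c_i^\perp$. The pointwise bound $|c_i^\perp| \leqslant c_0^{-1} |[\Phi_0, c_i]|$ yields $\|c_i^\perp\|_{L^2(\mathbb{R}^3 \setminus B_R)} \lesssim \|c_i\|_{\cH_1}$, while the Abelian property of $\ker \ad \Phi_0(x)$ gives $[c_1^0, c_2^0] = 0$ pointwise on that region. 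Hence
\begin{equation*}
	|N(c_1, c_2)| \lesssim |c_1^\perp| \, |c_2| + |c_1| \, |c_2^\perp|
\end{equation*}
there, and H\"older's inequality $\|fg\|_{L^2} \leqslant \|f\|_{L^3} \|g\|_{L^6}$ combined with the Sobolev embedding $\cH_1 \hookrightarrow L^6$ from \Cref{lem:Sobolev_Hardy} and the interpolation $\|c_i^\perp\|_{L^3} \leqslant \|c_i^\perp\|_{L^2}^{1/2} \|c_i^\perp\|_{L^6}^{1/2}$ yield $\|N(c_1, c_2)\|_{L^2(\mathbb{R}^3 \setminus B_R)} \lesssim \|c_1\|_{\cH_1} \|c_2\|_{\cH_1}$.

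On the bounded region $B_R$ the weight $\rho$ is uniformly bounded above and below, so Hardy's inequality gives $\|c_i\|_{L^2(B_R)} \lesssim \|\rho^{-1} c_i\|_{L^2} \lesssim \|c_i\|_{\cH_1}$. Applying \Cref{lem:Multiplication_L2} after localizing to $B_R$ (so that the restrictions lie genuinely in $L^2$) then yields $\|N(c_1, c_2)\|_{L^2(B_R)} \lesssim \|c_1\|_{\cH_1} \|c_2\|_{\cH_1}$. Adding the contributions from $B_R$ and $\mathbb{R}^3 \setminus B_R$ completes the proof.

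I expect the main obstacle to be verifying cleanly that $\ker \ad \Phi_0(x)$ is Abelian on $\mathbb{R}^3 \setminus B_R$: this is the structural consequence of \Cref{hypoth:maxsymbr} combined with the convergence $\Phi_0 \to \Phi_\infty$ (which places $\Phi_0(x)$ itself in the interior of a Weyl chamber for large $|x|$, so its centralizer is a Cartan subalgebra), but it requires some care to ensure the decomposition $\g_P = \g_P^0 \oplus \g_P^\perp$ varies smoothly in $x$ and that the error estimates from using $\Phi_0$ rather than $\Phi_\infty$ in this splitting are absorbed into the constants.
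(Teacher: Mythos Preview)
Your proposal is correct and follows essentially the same route as the paper: split into a bounded ball (where $\cH_1$ and $L_1^2$ are equivalent and \Cref{lem:Multiplication_L2} applies directly) and its complement, and on the exterior use the decomposition $\g_P = \ker(\ad\Phi_0) \oplus \ker(\ad\Phi_0)^\perp$ together with the Abelian property from maximal symmetry breaking to kill the $[c_1^{||},c_2^{||}]$ term. The only cosmetic difference is that on the exterior the paper invokes \Cref{lem:Multiplication_L2} again (using $\|c^\perp\|_{\cH_1}\sim\|c^\perp\|_{L_1^2}$ and $\|c^{||}\|_{\cH_1}\sim\|\nabla_0 c^{||}\|$), whereas you use the $L^3\times L^6$ H\"older pairing with interpolation; both reach the same bound.
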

\begin{proof}
	We start by proving the claimed inequality inside a bounded domain $\Omega \subset \mathbb{R}^3$. Notice that, as $\Omega$ is bounded and $\rho$ continuous the $H^1(\Omega)$-norm is equivalent to $L_1^2 (\Omega)$ norm. Then, from \Cref{lem:Multiplication_L2} we immediately obtain 
	\begin{equation}
		\| N(c_1,c_2) \|_{\cH_0(\Omega)} \leqslant C_{m_0} \|c_1 \|_{\cH_1(\Omega)} \|c_2 \|_{\cH_1(\Omega)}.
	\end{equation}
	We are then left with proving the inequality in the statement outside a compact domain. By \Cref{hypoth:monodecay}, the Higgs field $\Phi_0$ approaches a $\nabla^\infty$-parallel field $\Phi_\infty \neq 0$ at infinite. Hence, there exists an $R > 0$, such that $|\Phi_0| > |\Phi_\infty|/2$. On the other hand, \Cref{hypoth:monodecay,hypoth:maxsymbr} implies that $\ker(\ad (\Phi_0) )$ is Abelian on $\mathbb{R}^3 - B_R (0)$. Let $\underline{\g}$ denote the trivial $\Lie (G)$-bundle, which we identify now with $\ad (P)$. Furthermore, let $\underline{\g}^{||} = \ker(\ad (\Phi_0))$. Then, over $\mathbb{R}^3 - B_R (0)$, $\underline{\g}^{||}$ is a smooth, Abelian Lie-algebra bundle, and
	\begin{equation}
		\underline{\g} \simeq \underline{\g}^{||} \oplus \underline{\g}^{\perp},
	\end{equation}
	with $\underline{\g}^{\perp} = \ker(\ad (\Phi_0))^{\perp}$. Sections of $(\Lambda^0 \oplus \Lambda^2) \otimes \underline{\g}$ can be then written as $c = c^{||} + c^{\perp}$. By the maximal symmetry breaking hypothesis again, we have that $[\underline{\g}^{||}, \underline{\g}^{||}] = \{ 0 \}$. Thus 
	\begin{equation}
		[c_1, c_2] =  [c_1^{||}, c_2^{\perp}] + [c_1^{\perp}, c_2^{||}] + [c_1^{\perp}, c_2^{\perp}].
	\end{equation}
	Thus, it suffices to prove the stated inequality separately for each of these components. Start by noticing that $\|c^{\perp}\|_{\cH_1} \sim \|c^{\perp}\|_{L_1^2 }$ while $\|c^{||}\|_{\cH_1} \sim \|\nabla_0 c^{||}\|_{L^2} + \| \rho^{-1}c^{\perp}\|_{L^2} \sim \|\nabla_0 c^{||}\|_{L^2} $, and so making use of \Cref{lem:Multiplication_L2} we have
	\begin{equation}
		\| [c_1^{\perp}, c_2^{||} ] \| \lesssim (\| c_1^{\perp} \| + \| \nabla c_1^{\perp} \|) \| \nabla c_2^{||} \| \lesssim \| c_1^{\perp} \|_{\cH_1} \| c_2^{||} \|_{\cH_1} \lesssim \| c_1 \|_{\cH_1} \| c_2 \|_{\cH_1}.
	\end{equation}
	A similar application of \Cref{lem:Multiplication_L2} with the roles of $c_1$, $c_2$ interchanged gives the same bound on the term $[c_1^{\perp}, c_2^{||}]$, and in any order regarding $c_1$, $c_2$ also one for the term $[c_1^{\perp}, c_2^{\perp}]$.
\end{proof}

\subsection{Preparation for the proof of the \Cref{Mtheorem:Main_Haydys}}\label{subsec:Setup_Haydys_Equation}

Let $\cD = \cA \oplus \Omega^0 (M, \g_P) $ and $\cC = T \cD = \cA \oplus \Omega^0 (M, \g_P) \oplus \Omega^1 (M, \g_P) \oplus \Omega^0 (M, \g_P)$ be the configuration space for Haydys monopoles, that is $(\nabla,\Phi, a, \Psi) \in \cC$. Furthermore, let $\cR = \Omega^1 (M, \g_P) \oplus \Omega^1 (M, \g_P) \oplus \Omega^0 (M, \g_P)$, and
\begin{equation}
	\kappa : \cC \rightarrow \cR; \ (\nabla,\Phi, a, \Psi)  \mapsto  \begin{pmatrix}  \ast F_\nabla - d_\nabla \Phi - \tfrac{1}{2} \ast [a \wedge a] + [a, \Psi]  \\  \ast d_\nabla a - d_\nabla \Psi - [a, \Phi]  \\  d_\nabla^* a + [\Psi, \Phi]  \end{pmatrix}.
\end{equation}
Then we can rewrite the Haydys monopole \cref{eq:Haydys_Mono_1,eq:Haydys_Mono_2,eq:Haydys_Mono_3} as
\begin{equation}
	\kappa (\nabla,\Phi, a, \Psi) = 0.
\end{equation} 
We call $\kappa$ the {\em Haydys map}.

\begin{remark}\label{rem:Almost_KWm}
Note that if $m_0 = (\nabla_0, \Phi_0)$ is a Bogomolny monopole on $M$, then for $v_0 = (a_0, \Psi_0) \in \Omega^1 (M, \g_P) \oplus \Omega^0 (M, \g_P)$ the last two components of $\kappa (m_0,v_0)$ are exactly the linearization of the Bogomolny \cref{eq:BPSeq} together with the usual Coulomb type gauge fixing condition which we have been writing as $D (v_0) = (d_2 \oplus d_1^*) (v_0)$; see \Cref{subsec:Bogomolny_Linearization}. Let now $v_0$ be a tangent vector to the Bogomolny monopole moduli space at $m_0$ as in \Cref{def:tangent}. Then the last two components and the terms involving $m_0$ in the first component of $\kappa (m_0, v_0)$ vanish, but the (quadratic) terms depending on $v_0$ need not, in general. So $(m_0, \nu_0)$ fails to solve the Haydys monopole \cref{eq:Haydys_Mono_1,eq:Haydys_Mono_2,eq:Haydys_Mono_3} for $v_0 \in \ker(D) - \lbrace 0 \rbrace$, but the error is of order $O(|v_0|^2)$ pointwise.
\end{remark}

\subsection{Linearization and gauge fixing}\label{subsec:Linearization_Haydys}

In this section, we look for solutions of the Haydys monopole \cref{eq:Haydys_Mono_1,eq:Haydys_Mono_2,eq:Haydys_Mono_3} as follows:  Let $m_0$ be a (finite energy) Bogomolny monopole, and $v_0 = (a_0, \Psi_0)$ be a tangent vector to the Bogomolny monopole moduli space at $m_0$ with unit $L^2$-norm. Then consider $c_0 = (m_0, t v_0)=(\nabla_0, \Phi_0, t a_0, t \Psi_0)$, for small $t$, which is $O(t^2)$ away from being a Haydys monopole, as in \Cref{rem:Almost_KWm}. Since $\cC$ is an affine space over $\cV = (\Omega^1 (M, \g_P) \oplus \Omega^0 (M, \g_P))^{\oplus 2}$, we have that $T_{c_0} \cC \simeq \cV$. Let then $\delta c_0 = ( b_1, \phi, b_2, \psi) \in \cV$, and we search for a solution which is of the form $c = (m, v) = (m_0, t v_0) + \delta c_0$. As we are interested in solutions up to gauge equivalence only, it is convenient to work on the orthogonal complement of a slice of the gauge action. For that reason, we add the condition that $\delta c_0$ is orthogonal to the gauge slice passing through $(m_0, t v_0)$, which is equivalent to
\begin{equation}
	g_{c_0} (\delta c_0) = d_1^*(b_1, \phi) - \ast[a_0 \wedge \ast b_2] - [\Psi, \psi] = 0.
\end{equation}
We can further restrict the form of $\delta c_0$: write $\delta c_0 = (\delta m_0, \delta v_0)$, and require that both $\delta m_0$ and $\delta v_0$ are perpendicular to the kernel of $D$, that is to all tangent vector to the Bogomolny monopole moduli space at $m_0$. In other words, $\delta c_0 \in \Im (D^* \oplus D^*)$. We do not lose any generality due to this requirement in what follows. Indeed, a tangential component in $\delta m_0$ would just ``redefine'' $m_0$, and---even more clearly---a tangential component in $\delta v_0$ could be absorbed in $v_0$.

\subsection{The proof of \Cref{Mtheorem:Main_Haydys}}\label{subsec:Proof_Haydys}

\begin{proof}

From the discussion in the previous section, the gauge fixed Haydys monopole \cref{eq:Haydys_Mono_1,eq:Haydys_Mono_2,eq:Haydys_Mono_3} around $c_0 \in \cC$ is encoded in a map
\begin{equation}
	\hat{\kappa}_{c_0} : \cV \rightarrow \cR \oplus \Omega^0 (M, \g_P); \ \delta c_0 \mapsto (\kappa (c_0 + \delta c_0), g_{c_0} (\delta c_0)).  \label{eq:kappahat}
\end{equation}
Let $\cV_0 = \Omega^1 (M, \g_P) \oplus \Omega^0 (M, \g_P)$, and thus $\cV \simeq \cR \oplus \Omega^0 (M, \g_P) \simeq \cV_0^{\oplus 2}$. In this sense, the linearization of $\hat{\kappa}_{c_0}$ at a Bogomolny monopole $c_0 = (m_0, 0)$ is
\begin{equation}
	d\hat{\kappa}_{c_0} = 
	\begin{pmatrix}
		\cL & 0 \\
		0 & \cL
	\end{pmatrix},
\end{equation}
where $\cL$ is the gauge fixed linearization of the Bogomolny equation at $m_0$. Under \Cref{hypoth:maxsymbr}, $\cL$ is a continuous and surjective Fredholm operator (cf.  \cites{Taubes1983,K11,O16}), and thus so is $d\hat{\kappa}_{c_0}$. Hence we can conclude that $\cM_H$ is a smooth manifold around $\cM_B$, and moreover the normal bundle of $\cM_B$ is isomorphic to $T \cM_B$. In particular, $\dim (\cM_H) = 2 \dim (\cM_B)$. We construct below an explicit isomorphism.

Now let $c_0 = (m_0, t v_0)$, where again $v_0$ is a tangent vector of the Bogomolny monopole moduli space at $m_0$ with unit $L^2$-norm, and $t$ is to be specified later. Let $\delta c_0 = s \delta c$, where $\delta c = (\delta m, \delta v)$ and $s$ a {\em small} parameter also to be specified later. By \Cref{rem:Almost_KWm}, the gauge fixed Haydys monopole \cref{eq:Haydys_Mono_1,eq:Haydys_Mono_2,eq:Haydys_Mono_3} become $\hat{\kappa}_{c_0} (s \delta c) = 0$. Since $\hat{\kappa}_{c_0}$ is quadratic, the equation becomes
\begin{equation}
	 0 = \hat{\kappa}_{c_0} (s \ \delta c) = \hat{\kappa}_{c_0} (0) + s \ d \hat{\kappa}_{c_0} (\delta c) + s^2 \ Q_{c_0} (\delta c, \delta c),  \label{eq:Linearized_KWm}
\end{equation}
where $Q_{c_0}$ is the continuous quadratic remainder in the Taylor expansion of $\hat{\kappa}$ around $(c_0, 0)$, and there are no higher order terms. As noted in \Cref{rem:Almost_KWm}, we have
\begin{equation}
\hat{\kappa}_{c_0} (0) = (\kappa (m_0, t \ v_0), 0) = t^2 \ (\kappa (m_0, v_0), 0) = O (t^2 \ |v_0|^2).
\end{equation}
Short computation shows that in the direction $(b_1, \phi, b_2, \psi) \in \cV \simeq T_{c_0} \cC$, the linearization of $\kappa$ at $c_0 = (\nabla_0, \Phi_0, a_0, \Psi_0)$ is
\begin{equation}
	d\kappa_{c_0} (b_1,\phi,b_2,\psi) =
	\begin{pmatrix}
		d_2	(b_1, \phi) - \left( [a_0 \wedge b_2] + [\ast b_2, \Psi_0] + [\ast a_0, \psi] \right)  \\
		d_2 (b_2, \psi) + \left( [b_1 \wedge a_0] - [\ast b_1, \Psi_0] - [\ast a_0, \phi] \right)  \\
		d_1^* (b_2, \psi) - \left( \ast [b_1 \wedge \ast a_0] + [\phi, \Psi_0] \right)
	\end{pmatrix},
\end{equation}
which combined with the linearization of $g$, yields
\begin{equation}
	d \hat{\kappa}_{c_0} (\delta c) = d \hat{\kappa}_{c_0} (\delta m, \delta v) = (D \delta m, D \delta v) + t \ L_{c_0} (\delta c)
\end{equation}
for some continuous, linear remainder term $L_{c_0}$, which is algebraic (not a differential operator). Since $\delta c = (\delta m, \delta v) \in \Im (D^* \oplus D^*)$, we can write $(\delta m, \delta v) = (D^* u_1, D^* u_2) = \bD^* u$, where $\bD^*=D^* \oplus D^*$ and $u=(u_1, u_2) \in \cH_2^{\oplus 2}$. Thus we have
\begin{equation}
	d \hat{\kappa}_{c_0} (\delta c) = (D D^* u_1, D D^* u_2) + t \ L_{c_0} (\bD^* u ) = \bD \bD^* u + t \ L_{c_0} (\bD^* u).
\end{equation}
Hence the gauge fixed Haydys \cref{eq:Linearized_KWm}---now in terms of $u$---becomes
\begin{equation}
	 0 = t^2 \ (\kappa (m_0, v_0), 0) + s \ \bD \bD^* u + st \ L_{c_0} (\bD^* u) + s^2 \ Q_{c_0} (\bD^* u, \bD^* u),  \label{eq:Linearized_KWm2}
\end{equation}
By \Cref{cor:Green}, the operator $DD^*: \cH_2 \rightarrow \cH_0$ admits a continuous Green's operator $G$. Let $\bG = G \oplus G$, thus $\bG \circ \bD \bD^* = \id_{\cH_2^{\oplus 2}}$. Thus, applying $\bG$ to \cref{eq:Linearized_KWm2} yields
\begin{equation}\label{eq:greenedHaydys}
	\begin{aligned}
	0 	&= \bG \left( t^2 \ (\kappa (m_0, v_0), 0) + s \ \bD \bD^* u + st \ L_{c_0} (\bD^* u) + s^2 \ Q_{c_0} (\bD^* u, \bD^* u) \right)  \\
		&= t^2 \ \bG (\kappa (m_0, v_0), 0) + s \ u + st \ \bG (L_{c_0} (\bD^* u)) + s^2 \ \bG (Q_{c_0} (\bD^* u, \bD^* u)).
	\end{aligned}
\end{equation}
Note, that \cref{eq:greenedHaydys} can be rewritten as a fixed point equation on $u$ as
\begin{equation}
	u = F(u) = - \tfrac{t^2}{s} \ \bG (\kappa (m_0, v_0), 0) - t \ \bG (L_{c_0} (\bD^* u)) - s \ \bG (Q_{c_0} (\bD^* u, \bD^* u)).  \label{eq:Fixed_Point}
\end{equation}
In what follows, for each operator on a Banach space, $X$, let $\| X \|$ be its operator norm. In order to use the Banach Fixed Point Theorem, we now prove that for $t$ sufficiently small and $s$ chosen appropriately, $F$ is a contraction from $B_1 (0) \subset \cH_2^{\oplus 2}$ to itself. First, we prove it maps $B_1(0) \subset \cH_2^{\oplus 2}$ to itself. Indeed, using \Cref{lem:Multiplication_H_Spaces,cor:Fredholm} together with $\| v_0 \|_{\cH_0} = 1$, we obtain that for $u \in B_1(0) \subset \cH_2^{\oplus 2}$ 
\begin{align}
	\| F(u) \|_{\cH_2^{\oplus 2}} 	&\leqslant \tfrac{t^2}{s} \ \| \bG (\kappa (m_0, v_0), 0) \|_{\cH_2^{\oplus 2}} + t \ \| \bG (L_{c_0} (\bD^* u)) \|_{\cH_2^{\oplus 2}} + s \ \| \bG (Q_{c_0} (\bD^* u, \bD^* u)) \|_{\cH_2^{\oplus 2}}  \\
									&\leqslant C_{m_0} \ \| \bG \| \ \left( \tfrac{t^2}{s} + t \ \| \bD^* \| + s \ \| \bD^* \|^2 \right).
\end{align}
For a fixed $t > 0$, and varying, but positive $s$, the term in the parentheses is minimized when $s (t) = \tfrac{t}{\| \bD^* \|}$, in which case we get
\begin{equation}
	\| F(u) \|_{\cH_2^{\oplus 2}} \leqslant three C_{m_0} \| \bG \| \| \bD^* \| \ t \leqslant 6 C_{m_0} \| G \| \| D^* \| \ t.
\end{equation}
Hence, for $t \leqslant t_{\max} (m_0) = (6 C_{m_0} \| G \| \| D^* \|)^{-1}$, $F$ definitely maps the ball $B_1(0) \subset \cH_2^{\oplus 2}$ to itself. Now we show that in this case $F$ is also a contraction. Let $u, v \in B_1(0)$. Then, by \Cref{lem:Multiplication_H_Spaces,cor:Fredholm}, we have for $s = \tfrac{t}{\| \bD^* \|}$
	\begin{align}
		\| F (u) - F (v) \|_{\cH_2^{\oplus 2}}	&\leqslant t \ \| \bG (L_{c_0} (\bD^* (u-v))) \|_{\cH_2^{\oplus 2}}  \\
												&\quad + \tfrac{t}{\| \bD^* \|} \ \|(G \circ Q_{c_0})(\bD^* u, \bD^* u) - (G \circ Q_{c_0})(\bD^* v, \bD^* v) \|_{\cH_2^{\oplus 2}}  \\
												&\leqslant t \ C_{m_0} \ \| \bG \| \ \| \bD^* \| \ \| u - v \|_{\cH_2^{\oplus 2}}  \\
												&\quad + \tfrac{t}{\| \bD \|} \ \| \bG \| \ \|Q_{c_0} (\bD^* (u + v), \bD^*(u - v)) \|_{\cH_0^{\oplus 2}}  \\
												&\leqslant t \ C_{m_0} \ \| \bG \| \ \left( \| \bD^* \| \ \| u - v \|_{\cH_2^{\oplus 2}} + \tfrac{1}{\| \bD^* \|} \ \|\bD^*(u + v) \|_{\cH_1^{\oplus 2}} \| \bD^* (u - v)) \|_{\cH_1^{\oplus 2}} \right)  \\
												&\leqslant t \ C_{m_0} \| \bG \| \ \left( \| \bD^* \| \| u - v \|_{\cH_2^{\oplus 2}} + \tfrac{1}{\| \bD^* \|} \ \|\bD^* \|^2 \|u + v\|_{\cH_1^{\oplus 2}} \| u - v \|_{\cH_1^{\oplus 2}} \right)  \\
												&\leqslant t \ (3 C_{m_0} \| \bG \| \| \bD^* \|) \ \| u - v \|_{\cH_2^{\oplus 2}}  \\
												&\leqslant \tfrac{t}{t_{\max} (m_0)} \ \| u - v \|_{\cH_2^{\oplus 2}}.
	\end{align}
	Hence, if $t < t_{\max} (m_0)$ the hypotheses of the Banach Fixed Point Theorem apply, and so there is a unique solution to the fixed point \cref{eq:Fixed_Point}, which in turn provides a (unique) solution to the Haydys monopole \cref{eq:Haydys_Mono_1,eq:Haydys_Mono_2,eq:Haydys_Mono_3} of the form
	\begin{equation}
		(m, v) = (m_0 + s \ \delta m, t \ v_0 + s (t) \ \delta v) = \left( m_0 + t \ \tfrac{D^* u_1}{\| \bD^* \|}, t \ v_0 + t \ \tfrac{D^* u_2}{\| \bD^* \|} \right).
	\end{equation}
	In fact, $u$ can be approximated as
	\begin{equation}
		u = u (m_0, t \ v_0) = \lim\limits_{n \rightarrow \infty} F^n (0),
	\end{equation}
	where recall that the $(m_0, t \ v_0)$-dependence is encoded in $F$; see \cref{eq:Fixed_Point}. This concludes the proof of \Cref{Mtheorem:Main_Haydys}.
\end{proof}

\begin{remark}
	In our construction the neighborhood of the monopole moduli space we constructed is an open ball-bundle, with, a priori, varying radius $t_{\max} (m_0)$. In particular the normal bundle of $\cM_B \subset \cM_H$ is canonically isomorphic to the tangent bundle $T \cM_B$.
\end{remark}
	
\begin{remark}
	Note furthermore that the situation is similar to that of the Higgs bundle moduli space over a Riemann surface: every holomorphic bundle over a closed Riemann surface can be viewed as a Higgs bundle with vanishing Higgs field, and thus defines a submanifold of the Hitchin moduli space. Moreover the tangent bundle and normal bundles of this submanifold are isomorphic.\\
	We see the same picture with the Riemann surface replaced by $\mathbb{R}^3$, holomorphic bundles replaced by monopoles, and Higgs bundles replaced by Haydys monopoles.
\end{remark}

\begin{example}
	Let $\rG = \SU (2)$ and $\cM_B$ be the charge 1 moduli space; see \Cref{subsec:Dimension_Moduli_Space_Haydys} for definitions. In this case $\cM_B = \mathbb{R}^3 \times \mathbb{S}^1$ with its flat metric. Furthermore, there is an action of $\mathbb{R}^3$ by translations and of $\mathbb{S}^1$ by gauge transformations of the form $\exp(s\Phi)$, for $s\in \mathbb{R}$. This action extends to $\cM_H$ and using it we find that $\cM_H$ is a product Riemannian manifold of the form $\cM_H \simeq \mathbb{R}^3 \times \mathbb{S}^1 \times F$, where the fiber $F$ is a 4-dimensional hyperk\"ahler manifold along its (nonempty) smooth locus, $F_{\mathrm{smooth}}$. In particular, $t_{\max}$ is constant (and thus its infimum is positive), and the smooth locus of $\cM_B$ is $\mathbb{R}^3 \times \mathbb{S}^1 \times F_{\mathrm{smooth}}$.
\end{example}

\section{On the geometry of the Haydys monopole moduli space}\label{sec:Haydys_Geometry}

\subsection{Dimension of the Haydys moduli space}\label{subsec:Dimension_Moduli_Space_Haydys}

Let us consider the $\rG = \SU (2)$ case first. Let $(\nabla, \Phi)$ be a finite energy $\SU (2)$ Bogomolny monopole. By \Cref{hypoth:monodecay}, regarding $\ad(P_\infty)$ a real, oriented, rank-3 vector bundle over $\mathbb{S}_\infty^2$, the asymptotic Higgs field $\Phi_\infty \in \Gamma(\ad(P_\infty))$ is nonzero and $\nabla^\infty$-parallel. Hence $\Phi_\infty$ determines a map from $\mathbb{S}_\infty^2$ to the sphere in the Lie algebra $\mathfrak{su}(2) \cong \mathbb{R}^3$ and so is a map between two 2-spheres. Hence, it has a degree which is commonly called charge. Let us denote by $\cM_H^k$ the connected component of the moduli space of Haydys monopoles with real structure group $\SU (2)$ that contains the Bogomolny monopoles of charge $k$. We now prove that $\dim_{\mathbb{R}} (\cM_H^k) = 8 k$. Indeed, the linearization of the gauge fixed Haydys monopole map \eqref{eq:kappahat}, is the linear operator $D \oplus D$. As in the case of monopoles, from the analysis in \cite{Taubes1983}*{Proposition~9.2} follows that any tangent vector $v$ to $\cM_H^k$ at Haydys monopole $(\nabla, \Phi, a, \Psi)$ must be such that
\begin{equation}
	\| \nabla c\|^2 + \| [\Phi,c] \|^2 < \infty.
\end{equation}
Thus, $\dim_{\mathbb{R}} (\cM_H^k) = \dim(\ker_{\cH_1} (D \oplus D))= 2 \dim (\ker_{\cH_1} (D))$, which, in fact, coincides with the index of $D \oplus D : \cH_1 \rightarrow \cH_0$. Finally, for $k>0$, the dimension of $\ker_{\cH_1} (D)$ can be computed, as in \cite{Taubes1983}*{Proposition~9.1}, to be $4k$. Putting these together we conclude that 
\begin{equation}
	\dim_{\mathbb{R}} (\cM_H^k) = 8k.
\end{equation} 
Thus, we have constructed an open subset of $\cM_H^k$.

\begin{remark}
Also for all $\rG$ we construct an open subset of the connected component of $\cM_H$ containing $\cM_B$. This follows immediately from the fact that
\begin{equation}
	\dim_{\mathbb{R}}(\cM_H) = \dim(\ker_{\cH_1} (D \oplus D))= 2 \dim(\ker_{\cH_1} (D))=2 \dim_{\mathbb{R}}(\cM_B).
\end{equation}
For $\rG = \SU (N)$ the latest have been computed in \cite{SG18}*{Theorem~4.3.9} as being 4 times the sum of the magnetic weights.
\end{remark}

\subsection{Geometric structures on the Haydys monopole moduli space}\label{sec:Geometric_Strucutres}

\subsubsection{Linear model}\label{ss:Linear_Model}

Let $\mathfrak{g}$ be a real semisimple Lie algebra and consider the quaternionic vector space $V=\mathfrak{g} \otimes_{\mathbb{R}} \mathbb{H}$. Writing an element of $V$ as $A=A_0+iA_1+jA_2+kA_3$ may be equipped with a $\rG$-bi-invariant metric obtained by $\langle A, A' \rangle = \sum_{i=0}^3 \langle A_i, A'_i \rangle$, where in the rightmost term we use the Killing form on $\mathfrak{g}$. The quaternionic structure determines three symplectic structures $\omega_I$, $\omega_J$, $\omega_K$ with respect to which the adjoint action of $\rG$ is tri-Hamiltonian. The three moment maps associated with these respectively are 
\begin{equation}
	\nu_i(A) = [A_0,A_i] +[A_j,A_k]
\end{equation}
where $i = 1,2,3$ respectively and $(i,j,k)$ is a cyclic permutation of $(1,2,3)$.

This whole setup may be complexified by considering $\mathfrak{g}_{\mathbb{C}}$ rather than $\mathfrak{g}$. Then, we define
\begin{equation}
	V_{\mathbb{C}} = \mathfrak{g}_{\mathbb{C}} \otimes_{\mathbb{R}} \mathbb{H} = (\mathfrak{g} \otimes_{\mathbb{R}} \mathbb{H}) \oplus i (\mathfrak{g} \otimes_{\mathbb{R}} \mathbb{H}) \cong V \oplus V
\end{equation} 
and use the rightmost term to extend the inner product from $V$ to $V_{\mathbb{C}}$. Furthermore, we use this to consider the three quaternionic structures on $V_{\mathbb{C}}$ given by
\begin{equation}
	I_1 = \begin{pmatrix}
	0 & -1 \\
	1 & 0
	\end{pmatrix} \ , \ \ \ I_2=\begin{pmatrix}
	I & 0 \\
	0 & - I
	\end{pmatrix} \ , \ \ \ I_3=\begin{pmatrix}
	0 & I \\
	I & 0
	\end{pmatrix},
\end{equation}
and similarly for $J$ and $K$. Notice in particular that $I_1 = J_1 = K_1$ but that all these complex structures together do {\em not} form a compatible octonionic structure. Further notice that for example $I_2 \circ J_2 \neq K_2$ and $I_3 \circ J_3 \neq K_3$, in fact we have
\begin{equation}
	I_2 \circ J_2 \circ K_2 = I_3 \circ J_3 \circ K_3 = \diag(1,-1).
\end{equation}
Together with the inner product $\langle \cdot , \cdot \rangle$ these complex structures give rise to three sets of hyperk\"ahler structures with respect to which $\rG$ acts in an Hamiltonian fashion. The associated moment maps can be written as
\begin{equation}\label{eq:Moment_Map_1}
	\mu_{I_1}(A,B) = \sum_{i=0}^3 [A_i,B_i] 
\end{equation}
with $\mu_{J_1} = \mu_{K_1}$ being given by same formula, while
\begin{equation}\label{eq:Moment_Map_2}
	\mu_{I_2}(A,B) = \left( [A_0,A_1] +[A_2,A_3] \right) - \left( [B_0,B_1] + [B_2,B_3] \right),
\end{equation}
and $\mu_{J_2}$, $\mu_{K_2}$ similarly obtained by cyclic permutations of $(1,2,3)$. Finally, we have
\begin{equation}\label{eq:Moment_Map_3}
	\mu_{I_3}(A,B) = \left( [A_0,B_1] + [A_2,B_3] \right) - \left( [A_1,B_0] + [A_3,B_2] \right),
\end{equation}
with again $\mu_{J_3}$ and $\mu_{K_3}$ being obtained from a cyclic permutation of $(1,2,3)$.

\begin{remark}
	There is one further quite natural hyperk\"ahler structure on $V_{\mathbb{C}} \cong V \oplus V$ which is given by $\diag(I,I)$, $\diag(J,J)$ and $\diag(K,K)$. Using these we can still obtain the moment maps $\mu_{I_2}, \mu_{J_2}, \mu_{K_2}$ as follows. Instead of considering a Riemannian metric on $V_{\mathbb{C}}$ we use the indefinite pairing
	\begin{equation}
		b ((A,B), (A',B')) = \langle A, A' \rangle - \langle B, B' \rangle.
	\end{equation}
	Using it and the quaternionic structure above we define three symplectic forms with respect to which we can define moment maps very much in the same manner. These coincide with the moment maps $\mu_{I_2}, \mu_{J_2}, \mu_{K_2}$. 
\end{remark}

We now consider the joint moment maps 
\begin{equation}
	\mu_I = (\mu_{I_1}, \mu_{I_2}, \mu_{I_3}): V_{\mathbb{C}} \rightarrow \mathbb{R}^3,
\end{equation}
together with $\mu_J$ and $\mu_K$. As the complex structure $I_1=J_1=K_1$ is common to the three triples, it is the only one which immediately restricts to
\begin{equation}
	Q = \mu_I^{-1}(0) \cap \mu_J^{-1}(0) \cap \mu_K^{-1}(0).
\end{equation}
We must now check that all the other ones equally do restrict to $Q$. The first observation which is relevant for our analysis is the fact that the zero level set of moment maps $\mu_I,\mu_J,\mu_K$ are invariant under the map $\iota: V_{\mathbb{C}} \to V_{\mathbb{C}}$ given by $\iota(A,B)=(A,-B)$ as can be immediately seen from the \cref{eq:Moment_Map_1,eq:Moment_Map_2,eq:Moment_Map_3}. From this, we find the following.

\begin{lemma}\label{lem:Iota}
	Let
	\begin{equation}
		\Fix (\iota)= \lbrace (A,0) \in V_\mathbb{C} \ | \ A \in V \rbrace,
	\end{equation}
	be the fixed locus of the involution $\iota$. As $Q \subset V_{\mathbb{C}}$ is invariant under the map $\iota$ we find that $\Fix (\iota) \cap Q = \Fix (\iota|_Q)$.
\end{lemma}

We now use this to shortcut the proof of the following statement. 

\begin{proposition}\label{prop:Complex_Structures}
	Along all of $Q$, the tangent spaces to $Q$ are invariant under the complex structure $I_1=J_1=K_1$.\\
	Along $\Fix (\iota) \cap Q$, the tangent spaces to $Q$ are invariant under all the On the other hand structures $I_i, J_i, K_i$ for $i=1,2,3$. In other words, these complex structures preserve $TQ|_{\Fix (\iota|_Q)}$.
\end{proposition}
\begin{proof}
	For the first statement we simply note that by the standard K\"ahler reduction technique the tangent spaces to $\mu_{I_1}^{-1}(0)$ are preserved by $I_1$. Now, let $x \in \mu_I^{-1}(0) \cap \mu_J^{-1}(0) \cap \mu_K^{-1}(0)$, we show that $T_x = T_x(\mu_I^{-1}(0) \cap \mu_J^{-1}(0) \cap \mu_K^{-1}(0))$ is invariant by $I_1$. This follows immediately from the formulas $d\mu_{J_2}\circ I_1 = d\mu_{J_3}$ and $d\mu_{K_2}\circ I_1 = d\mu_{K_3}$ which can be obtained by direct inspection.
	
	As for the second part of the statement, instead of showing that for $x \in \Fix (\iota)\cap Q$ the tangent space $T_x$ is invariant by all these complex structures, we show that its orthogonal complement $T_x^{\perp}$ is itself invariant. To do this notice that $T_x = \cap_{\cI = I, J, K} \cap_{i = 1}^3 \ker(d\mu_{\cI_i})$ and so
	\begin{equation}
		T_x^{\perp}= \lbrace \nabla \mu_{I_i}, \nabla \mu_{J_i}, \nabla \mu_{K_i} \ , \ i =1,2,3 \rbrace.
	\end{equation}
	However, recall that for $L=I,J,K$ and $i=1,2,3$ we have that for any $\xi \in \mathfrak{g}$ 
	\begin{equation}
		d \langle \xi, \mu_{L_i}  \rangle (\cdot)  = \omega_{L_i}(\xi_\ast , \cdot )= \langle L_i \xi_\ast , \cdot \rangle,
	\end{equation}
	where $\xi_\ast$ denotes the vector field in $V_{\mathbb{C}}$ obtained via the infinitesimal action of $\xi \in \mathfrak{g}$. Thus, from this formula and the definition of the gradient we find that $\nabla \left( \langle \xi, \mu_{L_i}  \rangle \right) = L_i \xi_\ast$. Thus, we find
	\begin{equation}
		T_x^{\perp}= \lbrace I_i\xi_\ast, J_i\xi_\ast , K_i\xi_\ast \ , \ \text{for} \ i = 1,2,3 \ , \ \xi \in \mathfrak{g} \rbrace,
	\end{equation}
	and must show this is invariant under the complex structures $I_i, J_i , K_i$. On the other hand, from \Cref{lem:Iota} we find that $d \iota|_Q :TQ \to TQ$, and as $\iota|_{\Fix (\iota |_Q)}$ is the identity 
	\begin{equation}
		(d \iota) |_{\Fix (\iota |_Q)}:TQ|_{\Fix (\iota |_Q)} \to TQ|_{\Fix (\iota |_Q)},
	\end{equation}
	is a bundle map. Furthermore, as $\iota$ is a linear map on $V_{\mathbb{C}}$, then regarding $T_x$ and $T_x^\perp$ as a subspace of $V_{\mathbb{C}}$, we can identity its derivative $d (\iota) |_{\Fix (\iota |_Q)}$ with $\iota$ itself.
	
	Hence, in order to show that $T_x^\perp$ is invariant under the complex structures $I_i, J_i , K_i$. This would be immediate if these complex structures formed a closed algebra as that of the octonions for example. That is only true modulo $\iota$, indeed we have $ I_2 \circ J_2 = \iota \circ K_2$, $ I_3 \circ J_3 = \iota \circ K_3$ and similar formulas for other compositions.
\end{proof}

\begin{remark}
	Alternatively, we can explicitly check that for all $A \in V$
	\begin{equation}
		\bigcap_{L \in \lbrace I,J,K \rbrace} \bigcap_{i=1}^3 \ker(d\mu_{L_i})_{(A,0)},
	\end{equation}
	is preserved by all the complex structures.
\end{remark}

\begin{remark}
	Given that $I_1 = J_1 = K_1$ we have $\dim (Q)= \dim (\mathfrak{g})$ if $\mathfrak{g}$ is finite dimensional.
\end{remark}

Notice that the fixed point locus of the involution $\iota$ is given by $ \Fix (\iota) = V \oplus 0 \subset V_{\mathbb{C}}$ is a complex submanifold with respect to $I_2,J_2,K_2$ which restrict to $\Fix (\iota)$ as $I,J,K$ respectively, thus inducing an hyperk\"ahler structure there. On the other hand, $\Fix (\iota)$ is totally real with respect to all the remaining complex structures. In fact, it is complex-Lagrangian with respect to the respectively induced complex symplectic structure. This may be trivially checked by noticing that it is complex with respect to $\cI_2$ and Lagrangian with respect to $\omega_{\cI_3}+i\omega_{\cI_1}$ for all $\cI = I, J, K$. In summary we have the following.

\begin{lemma}\label{lem:Fixed_Points_1}
	Consider $V_{\mathbb{C}}$ equipped with the complex structure $I_1=J_1=K_1$ and the corresponding symplectic form $\omega$. The fixed locus $\Fix (\iota)$ is Lagrangian with respect to $\omega$.
	
	On the other hand, $\Fix (\iota)$ is complex with respect to any of the structures $I_2$, $J_2$, $K_2$. Furthermore, $I_2$, $J_2$, $K_2$, equip $\Fix (\iota)$ with an hyperk\"ahler structure. In fact, $\Fix (\iota)$ is a complex-Lagrangian submanifold of  $V_{\mathbb{C}}$ with respect to the hyperk\"ahler structures induced by $(I_2,I_3,I_1)$, $(J_2,J_3,J_1)$, $(K_2,K_3,K_1)$ on $V_{\mathbb{C}}$.
\end{lemma}

\begin{remark}
	In the terminology of \cite{KW07}, $\Fix (\iota)$ is an A-brane with respect to $(\omega_1,I_1=J_1=K_1)$ and an (ABA)-brane with respect to the three hyperk\"ahler structures on $V_{\mathbb{C}}$ induced by $(I_1,I_2,I_3)$, $(J_1,J_2,J_3)$, $(K_1,K_2,K_3)$.\\ 
	Notice that the involution $\iota$ is non (anti)-symplectic or (anti)-holomorphic, so the construction above is not contained in any standard use of involutions to find branes.
\end{remark}

This linear model serves as the model for a more general construction which we use to obtain some interesting geometric structures in the moduli space of solutions to the Haydys equation in \Cref{ss:proof_of_MT3}. In the next section, we briefly outline the finite dimensional curved version of that construction.

\subsubsection{Curved model}

Let $X$ be a smooth manifold equipped with three different hyperk\"ahler structures, compatible with the same metric $h$, as those of the previous section. This means that they are all compatible with the same Riemannian metric $h$ and the complex structures $(I_1,I_2,I_3)$, $(J_1,J_2,J_3)$, $(K_1,K_2,K_3)$ satisfy $I_1=J_1=K_1$, which we denote by $L_1$ for clarity, and
\begin{equation}
	I_2 \circ J_2  \circ K_2 = I_3 \circ J_3  \circ K_3,
\end{equation}
with each of these sides squaring to the identity.\footnote{It is unclear whether this condition could be dropped in some cases of interest.} 
Further, we suppose that there is a Lie group action $\rG$ which acts on $X$ in a tri-Hamiltonian fashion, with respect to all three hyperk\"ahler structures. In order to perform a meaningful reduction we seek to require the structure which makes \Cref{prop:Complex_Structures} work. For that we must imitate the existence of a map $\chi$, taking the role of $\iota$ in the proof of \Cref{prop:Complex_Structures}, which along its fixed loci closes the algebra of the complex structures and preserves the moment map equation. Such a bundle map must be $\rG$-invariant, so that it descends to the quotient. Finally, notice that the fixed loci of an isometric anti-holomorphic involution on a K\"ahler manifold is not only Lagrangian but a minimal submanifold. This is summarized as follows.

\begin{proposition}\label{prop:Quotient}
	Let $Q= \bigcap_{\cI = I, J, K} \bigcap_{i = 1,2,3} \mu_{\cI_i}^{-1} (0)$, then $(L_1,\omega_{L_1})$ descends to $Q/\rG$ which is a K\"ahler manifold of real dimension $\dim(X)-8\dim(\rG)$. 
	
	Suppose there is a smooth $G$-invariant $L_1$-anti-holomorphic isometric involution $\chi :X \to X$ satisfying $\chi (Q) \subset Q$. Then, $\chi$ descends to $\hat{\chi}:Q/\rG \to Q/\rG$ and its fixed locus $F$ is a minimal Lagrangian submanifold of $Q/\rG$.\\
	Furthermore, if along $\Fix (\chi)\cap Q$, the bundle maps 
	$$ \lbrace I_1,I_2,I_3,J_1,J_2,J_3,K_1,K_2,K_3 , (d\chi)|_{\Fix (\chi)\cap Q} \rbrace$$ 
	form a closed algebra. Then, all the hyperk\"ahler structures on $X$ determined by the metric and $(I_1,I_2,I_3)$, or $(J_1,J_2,J_3)$, or $(K_1,K_2,K_3)$ restricts to $\Fix (\chi)\cap Q$ and descend to $F \subset Q/\rG$ which then is an hyperk\"ahler manifold of real dimension $\frac{1}{2}(\dim(X)-8\dim(\rG))$ in three different ways.
\end{proposition}

\subsubsection{Proof of \Cref{Mtheorem:Geometric_Structures}}
\label{ss:proof_of_MT3}

\begin{proof}
By fixing a connection one can identify the space of connections on $P$ with $\Omega^1 (M,\g_P)$, and do the construction from \Cref{ss:Linear_Model} with the quaternionic Lie algebra $V$ replaced by $\Omega^1 (M,\g_P) \oplus \Omega^0 (M, \g_P)$, with $M = \mathbb{R}^3$. We now recall the flat hyperk\"ahler structure on $\Omega^1 (M, \g_P) \oplus \Omega^0 (M, \g_P)$. This is obtained by first fixing the usual $L^2$-metric
\begin{equation}
	h_B ( (\dot\nabla, \dot{\Phi}), ( \dot\nabla', \dot{\Phi}') ) = \int\limits_M \left( \langle \dot\nabla, \dot\nabla' \rangle + \langle  \dot{\Phi}, \dot{\Phi}' \rangle \right) \vol,
\end{equation}
also used to define the metric on the moduli space of Bogomolny monopoles. Then, we consider the complex structures $I_v$, parametrized by $v \in \mathbb{S}^2 \subset \mathbb{R}^3$ acting on $(c,\psi) \in \Omega^1 (M, \g_P) \oplus \Omega^0 (M, \g_P)$ as follows. Identify $(c,\psi)$ with $c + \psi dt \in \Omega^1 (M \times \mathbb{R}_t, \g_P)$, then use the identifications $M\times \mathbb{R}_t \cong v^{\perp} \oplus (v \mathbb{R} \times \mathbb{R}_t) \cong \mathbb{C}^2$ to define a complex structure on $M \times \mathbb{R}_t$ and define $I_v$ as its action by pullback on $\Omega^1 (M \times \mathbb{R}_t, \g_P)$. Using $e_1$, $e_2$, $e_3$ as the standard basis of $\mathbb{R}^3$, we write $I=I_{e_1}$, $J=I_{e_2}$, $K=I_{e_3}$.

Finally, we turn to our version of $V_{\mathbb{C}}$ which is the configuration space 
\begin{equation}
	\cC = (\cA \oplus \Omega^0 (M, \g_P) ) \oplus (\Omega^1 (M, \g_P) \oplus \Omega^0 (M, \g_P)),
\end{equation} 
equipped the constant coefficient metric $h$ given by 
\begin{equation}
	h ( (\dot\nabla, \dot{\Phi}, \dot{a}, \dot{\Psi}), ( \dot\nabla', \dot{\Phi}', \dot{a}', \dot{\Psi}') ) = h_B( (\dot\nabla, \dot{\Phi}), (\dot\nabla', \dot{\Phi}') ) + h_B( (\dot{a}, \dot{\Psi}), (\dot{a}', \dot{\Psi}') ),
\end{equation}
with $h_B$ as above. Then, we may equip $\cC$ with the three quaternionic structures $(I_1,I_2,I_3)$, or $(J_1,J_2,J_3)$, or $(K_1,K_2,K_3)$ as in \Cref{ss:Linear_Model}. As in there, the gauge group $\cG$ of automorphisms of $P$ acts on $\cC$ by conjugation and so we obtain the moment maps which for convenience we organize here as 
\begin{equation}
	(\mu_{I_i},\mu_{J_i},\mu_{K_i}) : \cC \to \mathbb{R}^3 \otimes \Omega^0 (M \g_P) \cong \Omega^1 (M \g_P),
\end{equation}
for $i=1,2,3$.\footnote{Note that here we are organizing the moment maps in a nonstandard way. Indeed, for any fixed $i$, the complex structures $I_i, J_i,K_i$ do not follow the quaternionic relations} A straightforward computation shows that the equation
\begin{equation}
	\ast F_\nabla - d_\nabla \Phi - \tfrac{1}{2} \ast [a \wedge a] + [a, \Psi] = 0, 
\end{equation}
can be identified with $(\mu_{I_2},\mu_{J_2},\mu_{K_2})=0$. In the same way, we have
\begin{equation}
	\ast d_\nabla a - d_\nabla \Psi - [a, \Phi] = 0,
\end{equation}
which can be identified with $(\mu_{I_3},\mu_{J_3},\mu_{K_3})=0$. Finally the last equation
\begin{equation}
	d_\nabla^* a + [\Psi, \Phi] = 0, 
\end{equation}
corresponds to $\mu_{I_1}=\mu_{J_1}=\mu_{K_1}=0$ which recall is only one equation as $I_1=J_1=K_1$.

Formally, the same argument as that we used in \Cref{prop:Complex_Structures}, shows that this common complex structure restricts to the locus $\cC_H \subset \cC$ cut out by the Haydys equations. Thus, the K\"ahler structure determined by $(h,I_1=J_1=K_1)$ descends to the quotient 
\begin{equation}
	\cM_H = \cC_H / \cG,
\end{equation} 
which can be identified with the moduli space of solutions to the Haydys equation. On $\cC$ we have an involution $\iota$ sending $c = (\nabla, \Phi, a, \Psi)$ to $\iota(c)=(\nabla, \Phi, - a, - \Psi)$ which trivially preserves $\cC_H$. Thus, by \Cref{lem:Fixed_Points_1}, $\Fix (\iota) \subset \cC$ and so is a complex Lagrangian submanifold of $\cC$ with respect to the whole three hyperk\"ahler structures $(L_2,L_3,L_1)$ where $L \in \lbrace I, J , K \rbrace$, i.e. an ABA-brane with respect to any of these. In particular, as stated in that Lemma $\Fix (\iota)$ is Lagrangian with respect to $I_1=J_1=K_1$. 

The points of $\Fix (\iota)$ correspond to those $c$ of the form $c = (\nabla, \Phi, 0, 0)$. In particular, for $c \in \Fix (\iota) \cap \cC_H$ we find that $(\nabla,\Phi)$ is actually a Bogomolny monopole. Furthermore, $\iota$ is a $\cG$-invariant $I_1=J_1=K_1$-antiholomorphic and isometric involution of $\cC$ and as stated in \Cref{prop:Quotient} it descends to the Haydys moduli space $\hat{\iota}:\cM_H \to \cM_H$. Its fixed loci is then the moduli subspace of Bogomolny monopoles 
$$\cM_B = \Fix (\hat{\iota}) = (\Fix (\iota) \cap \cC_H ) / \cG ,$$
which is then a minimal Lagrangian submanifold of $\cM_H$ to which the hyperk\"ahler structures $\lbrace L_1 , L_2, L_3 \rbrace$, for $L \in \lbrace I,J,K \rbrace$ descend and agree. In other words, it is a minimal Lagrangian A-brane which arises as a reduction of $\Fix (\iota)$ which is itself an ABA-brane of $\cC$ with respect to the hyperk\"ahler structures $\lbrace L_2 , L_3, L_1 \rbrace$. This is the main result of this section which we state as follows.

\begin{theorem}\label{thm:Fixed_Points_2}
	There is an infinite dimensional affine space $\cC $ equipped with three are three $\cG$-invariant hyperk\"ahler structures such that space of Haydys monopoles $\cC_H \subset \cC$ is cut out the zero set of all the corresponding moment maps.
	
	The moduli space of Haydys monopoles $\cM_H=\cC_H / \cG$ is a K\"ahler manifold which has the moduli space of Bogomolny monopoles $\cM_B$ as a minimal Lagrangian submanifold of $\cM_H$. Furthermore, the three different hyperk\"ahler structures on $\cC$ descend to $\cM_B$ equipping it with an hyperk\"ahler structure.
\end{theorem}

This finally implies \Cref{Mtheorem:Geometric_Structures}.
\end{proof}

\begin{remark}
	In the terminology of \cite{KW07}, $\cM_B$ is a minimal A-brane of the K\"ahler manifold $\cM_H$ which arises from reduction of an (ABA)-brane with respect to the three hyperk\"ahler structures on $\cC$ induced by all the different structures $(I_1,I_2,I_3)$, $(J_1,J_2,J_3)$, $(K_1,K_2,K_3)$. Furthermore, these do descend to $\cM_B$, which the induced structure from $(I_2,J_2,K_2)$ and the metric $h$ equips with an hyperk\"ahler structure. 
\end{remark}


\begin{remark}
	As in the case of the Bogomolny monopole moduli space $\cM_B$, the translations of $\mathbb{R}^3$ and the gauge transformations of the form $\exp(s\Phi)$ for $s\in \mathbb{R}$ give rise to flat directions in the moduli space $\cM_H$. Thus, as in the case of monopoles we may quotient out by these and consider centered configurations. 
\end{remark}

\bibliography{references}
\bibliographystyle{abstract}

\end{document}